\documentclass[final]{siamart0516}
\usepackage{amsmath,amsfonts, bbm}
\usepackage{tikz}

\DeclareMathOperator*{\esslim}{ess\;lim}
\DeclareMathOperator*{\esssup}{ess\;sup}
\DeclareMathOperator*{\essinf}{ess\;inf}

\def\a{\alpha}
\def\b{\beta}
\def\e{\epsilon}
\def\del{\partial}
\def\eps{\varepsilon}
\def\ol{\overline}
\def\wt{\widetilde}
\def\wh{\widehat}
\def\<{\big\langle}
\def\>{\big\rangle}
\def\mc{\mathcal}

\def\NN{\mathbb{N}}
\def\DD{\mathbb{D}}
\def\RR{\mathbb{R}}
\def\11{\mathbbm{1}}
\def\com#1{\quad\textrm{#1}\quad}
\def\TS#1{{\textstyle{#1}}}
\def\wint{\star\!\!\int}
\def\qn#1{\big|\!\big|\!\big|\,#1\,\big|\!\big|\!\big|}

\title{Weak* Solutions II: The Vacuum in Lagrangian Gas Dynamics \\{\,}\vspace{8pt}
 \tiny (\rm In: SIAM Journal on Mathematical Analysis (2017), 49(3), 1810-1843.) }


\author{Alexey Miroshnikov\thanks {Department of Mathematics, University of
California, Los Angeles, amiroshn@gmail.com} \and
 Robin Young\thanks{Department of Mathematics and Statistics, University of
Massachusetts, young@math.umass.edu}}

\headers{Weak* Solutions II: The Vacuum}
{Alexey Miroshnikov and Robin Young}

\begin{document}

\maketitle

\begin{abstract}
We develop a framework in which to make sense of solutions containing
the vacuum in Lagrangian gas dynamics.  At and near vacuum, the
specific volume becomes infinite and enclosed vacuums are represented
by Dirac masses, so they cannot be treated in the usual weak sense.
However, the weak* solutions recently introduced by the authors can be
extended to include solutions containing vacuums.  We present a
definition of these \emph{natural vacuum solutions} and provide
explicit examples which demonstrate some of their features.  Our
examples are isentropic for clarity, and we briefly discuss the
extension to the full $3\times3$ system of gas dynamics.  We also
extend our methods to one-dimensional dynamic elasticity to show that
fractures cannot form in an entropy solution.
\end{abstract}

\begin{keywords}
gas dynamics, vacuum, conservation laws, elasticity, fracture
\end{keywords}

\begin{AMS}
35L67, 35L70, 74B20, 74H20
\end{AMS}

\section{Introduction}

The oldest and most fundamental system of hyperbolic conservation laws
is that of isentropic gas dynamics, which are the simplest analog of
Newton's Law for a continuous medium.  The equations can be expressed
either in an Eulerian spatial frame, or in a Lagrangian or co-moving
material frame.  In the Lagrangian frame, the equations are
\begin{equation}
  \del_tv - \del_xu = 0, \qquad
  \del_tu + \del_xp = 0,
\label{psyst}
\end{equation}
where $x$ is the material variable, $v$ is the specific volume, and
$u$ and $p$ are the fluid velocity and pressure, respectively.  The
system is closed by specifying a constitutive relation $p = P(v)$, a
monotone decreasing function which is integrable as $v\to\infty$.
Alternatively, in an Eulerian frame, the equations are
\[
  \del_t\rho + \del_y(\rho\,u) = 0, \qquad
  \del_t(\rho\,u) + \del_y(\rho\,u^2 + p) = 0,
\]
representing conservation of mass and momentum, respectively, where
$y$ is the spatial variable, and $\rho = 1/v$ is the density.

The main effect of nonlinearity in a hyperbolic system is the presence
of shock waves, across which the pressure and velocity are
discontinuous, and the equations cannot be satisfied in the classical
sense.  This problem is usually solved by the use of weak solutions,
which are defined by multiplying by test functions and integrating by
parts.  There is now a mature and largely complete theory of $BV$ weak
solutions of systems of conservation laws, provided the data is
appropriately small~\cite{Bressan}.

Another effect of nonlinearity is the presence of a vacuum, which
corresponds to $v=\infty$ or $\rho=0$.  The vacuum presents different
difficulties depending on the frame: in an Eulerian frame, the
equations degenerate and the velocity $u$ is underdetermined, while in
a Lagrangian frame the vacuum is formally described using a Dirac
mass, so the class of weak solutions is not large enough.  The goal of
this paper is to rigorously justify the use of Dirac masses and thus
present a satisfactory notion of solution which includes vacuums in a
Lagrangian frame.

In the recent paper~\cite{MY16}, the authors introduced the notion of
weak* solution, which we believe holds several advantages over weak
solutions.  Our approach is natural and general, and allows us to view
the system as an evolutionary ODE in Banach space, which in turn
confers some regularity.  In addition, the ``multiplication by test
function and integration by parts'' step is treated abstractly rather
than explicitly, leading to cleaner calculations.  Our approach is
also general enough to handle certain extensions, including the
treatment of vacuums as Dirac masses.  In ~\cite{MY16}, we also proved
that $BV$ weak solutions are weak* solutions and \emph{vice versa},
which implies that the well-known uniqueness and regularity results
for $BV$ solutions apply unchanged to weak* solutions.

To define a weak* solution of an abstract system of conservation laws,
\[
  \del_tU + \del_xF(U) = 0,
\]
we begin with a normed vector space $X$ of \emph{spatial} test
functions, and regard the solution $U(t)$ as a function taking values
in the dual space $X^*$ of $X$.  For $BV$ solutions, we take $X =
C_0(\Omega)^n$ with $\Omega \subset \RR $, so that $X^* = M(\Omega)^n$, the space of Radon
measures.  Then if $U\in BV_{loc}^n$, so is $F(U)$, and so the
distributional derivative $\DD_xF(U)\in X^*$.  We then declare $U$ to
be a weak* solution if is satisfies the Banach space ODE
\begin{equation}
  U' + \DD_xF(U) = 0 \com{in} X^*_{loc}=M_{loc}(\RR)^n,
\label{Bode}
\end{equation}
where $U'$ is the appropriate time derivative of $U(t)$.  This is the
Gelfand weak* derivative, or G-derivative, defined by using the
Gelfand weak* integral of functions $\phi:[0,T]\to X^*$.  The
associated spaces are labelled $W^{1,q}_{w*}(0,T;X^*)$.

In this paper, we extend the ideas of~\cite{MY16} to include the use
of Dirac masses in weak* solutions.  The key observation is that in
\cref{psyst}, although $v$ is no longer bounded, or even a function, the
flux vector $(u,p)$ remains $BV$, so that its spatial derivative is a
measure, so lives in $X^*$, and the ODE \cref{Bode} makes sense.
Instead of treating the constitutive relation $p=P(v)$ as a pointwise
function, we regard it as a map of fields,
\[
  P:\mc A\subset L^1(\Omega)\to L^1(\Omega)  \com{via}
  p = P\circ v:\Omega\to\RR,
\]
and in order to extend weak* solutions, we need only extend this to a
map $\wh P$ defined on positive Radon measures.  Since pressure
vanishes at vacuum, this extension is easily accomplished using the
Lebesgue decomposition theorem.  To avoid unphysical solutions, we
impose a condition which we call \emph{consistency of the medium}, and
which states that the density and pressure must vanish whenever a
vacuum is present; although this can be regarded as an entropy-type
condition, it is distinct from the usual entropy condition which
degenerates to an equality at vacuum.  We refer to a vacuum weak*
solution which satisfies consistency of the medium as a \emph{natural
  vacuum solution}.  In our framework the entropy and entropy flux are
also regarded as maps on $L^1$ which are similarly extended to
positive Radon measures.  The entropy production is calculated to
be a measure which is supported on shocks, and which is required to be
negative.  This again agrees with the entropy condition for $BV$ weak
solutions.

Once we have defined natural vacuum solutions to \cref{psyst} that
include Dirac masses which account for vacuums of finite extent, we
present a few detailed examples.  These are natural vacuum solutions
but not weak solutions, and our explicit description of the solutions
and calculations of norms clearly demonstrates the advantages gained
by treating the test functions and integration by parts abstractly and
implicitly in the spaces $W^{1,q}_{w*}(0,T;X^*)$.

We next describe the straight-forward extension of our results to the
full $3\times3$ equations of gas dynamics in a Lagrangian frame.  We
again define an extension of the pressure and specific internal energy
to the positive Radon measures, by declaring that the pressure and
internal energy vanish at vacuum.  We then define a weak* solution and
the corresponding entropy condition as would be expected.

As a final application, we extend our results to the equations of
one-dimensional elasticity,
\[
  \del_tu - \del_xv = 0, \qquad
  \del_tv - \del_x\tau(u) = 0,
\]
where $u$, $v$ and $\tau(u)$ are the strain, velocity and stress,
respectively; we assume that $\tau'(u)>0$, with a softening response,
$\tau''(u)<0$.  Here we reproduce results of Giesselmann and
Tzavaras~\cite{GT}, in which they introduce so-called
\emph{slic}-solutions to study crack formation and resolve an apparent
paradox of nonuniqueness of solutions found in~\cite{Spector}.
Following~\cite{GT}, we study the onset of fracture, which we
represent as a Dirac mass in the strain.  To do so, we obtain the
natural extension of the stress to Dirac masses, namely
\[
  \wh\tau(w_0\,\delta_{x_0}) = L_\tau\,w_0\,\delta_{x_0},
  \com{where} L_\tau := \lim_{u\to\infty}\frac{\tau(u)}{u}.
\]
Extending the stress and the energy allows us to define weak*
solutions, and a brief analysis reveals that weak* solutions admitting
a crack are defined if and only if $L_\tau = 0$; however, none of
these solutions are entropic.  These are the same conclusions as those
of~\cite{GT}, but our results significantly extend the one-dimensional
results of~\cite{GT}, because their analysis applies to the single
example of a solution provided in~\cite{Spector}, while ours hold for
any crack in a weak* solution.  In~\cite{GT}, slic solutions are
obtained as limits of mollified approximations, and their calculation
of a single example requires several integrations and error estimates.
In contrast, with our approach the mollification and integration by
parts is abstract, and we are able to work directly with measures,
leading to a direct and exact development without the need for error
estimates.

The paper is arranged as follows: in \cref{sec:prelims}, we set
notation and recall the definition and properties of weak* integrable
functions and the Gelfand integral, developed in our earlier
paper~\cite{MY16}.  Next we recall the definition of weak* solutions
to conservation laws, and specifically to gas dynamics \cref{psyst},
and extend this definition to include vacuums.  We derive generalized
Rankine-Hugoniot jump conditions and discuss the entropy condition,
while showing that it remains an identity at the vacuum.  In
\cref{sec:exs} we present some detailed examples of natural vacuum
solutions which are not weak solutions.  \Cref{sec:gd} briefly
describes the extension of our methods to the full system of gas
dynamics, and in \cref{sec:elast} we consider the onset and
propagation of fractures in one-dimensional elasticity.

\section{Preliminaries}
\label{sec:prelims}

We begin by setting notation and recalling the Gelfand integral and
related notions which are necessary to define weak* solutions of
systems of conservation laws.  For simplicity we work in a single
space dimension.  We refer the reader to~\cite{MY16} for a more
detailed discussion and proofs of quoted results.

\subsection{Banach spaces}

Given a vector space $X$ with norm $\|\cdot\|_X$, we denote its dual
by $X^*$, and recall
\[
  \big\|\phi\big\|_{X^*} :=
  \sup_{x \in X, x \neq 0} \frac{\<\phi,x\>}{\|x\|_X}\,.
\]
We denote the $n$-fold product by
$X^n := \big\{ (x_1,x_2,\dots,x_n): \, x_i \in X\big\}$, and equip it
with the ``Euclidean'' norm
\[
  \big\|x\big\|_{X^n} :=
  \Big(\sum_{i=1}^n \big\|x_i\big\|_X^2\Big)^{1/2}\,.
\]
It follows that if we define the action of $\phi\in(X^*)^n$ on $x\in X^n$
by
\[
  \<\phi,x\> := \sum_{i=1}^n \<\phi_i,x_i\>,
\]
then we can write $(X^*)^n = (X^n)^*$.  In particular, any statements
on scalar valued function spaces $X = X(\Omega) = \{f:\Omega\to \RR\}$
extend naturally to vector-valued functions $X^n = \{F:\Omega\to
\RR^n\}$.

We recall the hierarchy of spaces that are most useful for us: first,
fixing an open bounded $\Omega\subset\subset\RR$, we have the inclusions
\[
  BV(\Omega) \subset L^\infty(\Omega) \subset
  L^p(\Omega)\subset L^1(\Omega).
\]
Next, any $f\in L^1(\Omega)$ generates a measure $\mu_f = \iota(f)$,
given by
\[
  \iota(f)(E) = \mu_f(E) := \int_E f\;dx, \com{for each}
  E\in \mc B(\Omega),
\]
so we regard $\iota(L^1(\Omega))\subset M(\Omega)$, the set of Radon
measures on $\Omega$; moreover, we have
\[
  \big\|\iota(f)\big\|_{M(\Omega)} = \big|\mu_f\big|(\Omega)
  = \int_\Omega |f|\;dx = \big\| f \big\|_{L^1(\Omega)}.
\]
Note that for any $f\in L^1(\Omega)$, $\iota(f)\ll\lambda$, that is
$\iota(f)$ is absolutely continuous with respect to Lebesgue measure,
and indeed, $f=\frac{d\mu_f}{d\lambda}$ is the Radon-Nikodym
derivative of $\iota(f)$.  On the other hand, by the Lebesgue
decomposition theorem, any Radon measure $\mu\in M(\Omega)$ can be
uniquely decomposed into absolutely continuous and singular parts,
\[
  \mu = \mu_c + \mu_s \com{with}
  \mu_c \ll \lambda  \com{and}
  \mu_s \perp \lambda,
\]
and moreover $\frac{d\mu_c}{d\lambda} \in L^1(\Omega)$.  We thus
define the map
\begin{equation}
  \Pi:M(\Omega)\to L^1(\Omega) \com{by}
  \Pi(\mu) := \frac{d\mu_c}{d\lambda} \in L^1(\Omega),
\label{Pi}
\end{equation}
the Radon-Nikodym derivative of the absolutely continuous part of
$\mu$.  It then follows that
\[
  \Pi\circ\iota(f) = f \com{for} f\in L^1(\Omega),
\]
while also
\begin{equation}
  \iota\circ\Pi(\mu) = \mu_c \com{for}
  \mu_c + \mu_s =: \mu \in M(\Omega),
\label{iota}
\end{equation}
so that $\iota\circ\Pi:M(\Omega)\to M(\Omega)$ is projection onto the
absolutely continuous part of the measure.

Recall that the Radon measures form the dual of $C_0$:
that is, regarding $C_0(\Omega)$ as the closure of
$C_c^\infty(\Omega)$ under the sup-norm, we can regard
$M(\Omega)=C_0(\Omega)^*$ under the action
\[
  \< \mu, \varphi\> = \int_\Omega \varphi(x)\;\mu(dx),
  \quad \varphi \in C_0(\Omega),
\]
and it is not difficult to verify that
$\big\|\mu\big\|_{C_0(\Omega)^*} = \big\|\mu\big\|_{M(\Omega)}$.

\begin{definition}
\label{def:DX}
We say that $f\in X^*$ has an $X^*$-valued distributional derivative,
written $\DD_xf \in X^*$, if, for all $\phi\in C_c^\infty(\Omega)\subset X$,
we have
\[
  \big|\<f,\phi'\>\big| \le C\,\|\phi\|_X,
\]
where we recall $C_c^\infty(\Omega)$ is dense in $X$, and in this case we
define $\DD_xf$ by
\[
  \< \DD_xf,\phi \> := - \<f,\phi'\>.
\]
\end{definition}

Finally, recall that $BV(\Omega)$ is the set of functions whose
distributional derivative $\DD_x f$ is in $L^1$:
\[
  \|f\|_{BV} = \sup\sum\big|f(x_{i+1})-f(x_i)\big|
  = \int_\Omega |\DD_x f|\;dx = \big\| \DD_x f \|_{L^1},
\]
the supremum and sum being taken over finite ordered partitions.

We can combine the above together with product spaces, using
inclusions as necessary, to get the following hierarchy of spaces:
\begin{equation}
  BV(\Omega)^n \subset L^\infty(\Omega)^n \subset
  L^1(\Omega)^n \subset M(\Omega)^n = {C_0(\Omega)^*}^n,
\label{spaces}
\end{equation}
where these functions take values in $\RR^n$.  Moreover, since $\Omega
\subset \subset \RR$ is arbitrary, these inclusions extend to locally
bounded functions on all of $\RR$,
\begin{equation}
  BV_{loc}^n \subset (L_{loc}^\infty)^n \subset
  (L_{loc}^1)^n \subset M_{loc}^n.
\label{locspaces}
\end{equation}

\subsection{The Gelfand integral}

We next recall the definition and calculus of the Gelfand integral,
which we need to define weak* solutions.  Again we refer the reader to
\cite{MY16} for more details and proofs of statements.  We briefly
discuss different ways to integrate functions mapping to an abstract
Banach space, namely the Bochner integral and Gelfand integral.

The Bochner integral of $f:[0,T]\to X$ is obtained by approximating
functions by simple functions.  The function $f$ is strongly
measurable, or Bochner measurable, if $f^{-1}(E)$ is measurable for
each measurable $E\subset X$.  The integral of a simple measurable
function $f$ is defined in the usual way,
\[
  \int \sum u_i \, \mc X_{E_i}(t)\; dt = \sum u_i\,\lambda(E_i) \in X,
\]
and $f$ is Bochner integrable if there is a sequence $\{h_n\}$ of
simple functions such that the Lebesgue integral $\int_0^T \| h_n -
f\| \; dt\to 0$ as $n\to\infty$, and in this case we have
$\int_Ef\;dt = \lim\int_Eh_n\;dt$.

The Bochner integral requires strong measurability, which is not
always obvious in an abstract Banach space.  The Dunford integral is a
weak integral, defined using the functionals on $X$.  For our purposes
it is more convenient to use the Gelfand integral, which is defined
for functions $\phi$ which take values in the dual space $X^*$ of a
Banach space $X$.  The map $\phi:[0,T]\to X^*$ is weak* measurable if
$\<\phi(\cdot),\a\>:[0,T]\to\RR$ is Lebesgue measurable for all $\a\in
X$.  Two functions $\phi$ and $\psi$ are weak* equivalent if
$\<\phi(\cdot),\a\>=\<\phi(\cdot),\a\>$ for $\lambda$-almost all $t$.
It can be shown that any weak* measurable function $\phi$ is weak*
equivalent to a function $\wh\phi$ which is norm-measurable, by which
we mean the scalar function $\|\wh\phi(\cdot)\|_{X^*}$ is Lebesgue
measurable.  We will denote the weak* equivalence class of a weak*
measurable $\phi$ by $[\phi]$, and a norm-measurable representative by
$\wh\phi\in[\phi]$, although we will often abuse notation by simply
writing $\phi$ when there is no ambiguity.

The Gelfand integral is defined as follows.  Suppose that we are given
a weak*-measurable function $\phi:[0,T]\to X^*$, and suppose also that
\[
  \< \phi(\cdot), \a \> \in L^1(0,T) \com{for all} \a \in X\,.
\]
For a given Borel set $E$, we define the map $T_E:X\to L^1(0,T)$ by
\[
  T_E(\a) = \< \phi(\cdot), \a \>\,\chi_E(\cdot) \in L^1(0,T)\,.
\]
It is clear that $T_E$ is linear, and if $\a_n\to \a$ and $T_E(\a_n)\to
y$ in $L^1$, then by the Riesz-Fischer theorem, a subsequence
$T_E(\a_{n_k})(s)\to y(s)$ a.e., while also $T_E(\a_n)(s) \to \<
\phi(s), \a \>\,\chi_E(s)$ for all $s\in[0,T]$.  It follows that $y\in
L^1(0,T)$, so $T_E$ is closed, and further, by the closed graph
theorem, it is bounded, so we can write $\| T_E(\a) \|_{L^1} \le
\|T_E\|\,\|\a\|$ for all $\a\in X$.  Since integration is a bounded
linear operator of $L^1$ into $\RR$, it follows that the map
\[
  \a \mapsto \int_0^T T_E(\a)(s)\;ds = \int_E\< \phi(s), \a \>\;ds
\]
is a bounded linear functional on $X$, so defines an element of the
dual $X^*$.  This functional is the Gelfand integral of $\phi$ over
$E$, and we denote it by $\wint_E \phi(s)\;ds\in X^*$.  Thus the
Gelfand integral over a measurable set $E$ is that element of $X^*$
defined by the condition
\begin{equation}
  \label{G-int}
  \Big\langle \wint_E \phi(s)\;ds , \a \Big\rangle =
     \int_E \< \phi(s),\a\>\;ds \com{for all} \a\in X\,.
\end{equation}
Again it follows easily that if $\phi$ is Bochner integrable with
values in $X^*$, then it is Gelfand integrable and the integrals
coincide.

\subsection{Gelfand-Sobolev Spaces}

We now describe the $X^*$ valued Gelfand $L^q$ spaces, for $1\le
q\le\infty$.  Given a weak* equivalence class $[\phi]$ of Gelfand
integrable functions, set
\[
  \qn{[\phi]}_q := \inf\big\{ \|g\|_{L^q(0,T)}\  :\
  \|{\wh\phi}(t)\|\leq g(t)\ \textrm{$\lambda$-a.e.} \big\}\,,
\]
where $\wh\phi\in[\phi]$ is a norm-measurable element of the
equivalence class.  It follows that $\qn{\cdot}_q$ is a norm, and we let
$L^{q}_{w*}(0,T;X^*)$ be the space of equivalence classes $[\phi]$ of
finite norm,
\[
  L^{q}_{w*}(0,T;X^*) := \big\{ [\phi] :
     \qn{[\phi]}_q < \infty \big\}\,.
\]

It is not difficult to show that $L^q_{w*}(0,T;X^*)$ is a Banach space
and that the trivial inclusion of the Bochner $L^q$ space in the
Gelfand $L^q$ space
\[
L^q(0,T;X^*) \subset L^q_{w*}(0,T;X^*) \com{via} f \mapsto [f],
\]
is a norm-preserving isomorphism.  Moreover, if
$\wh\phi\in[\phi]\in L^q_{w*}(0,T;X^*)$ is norm-measurable, then
$\|\wh\phi\|\in L^q(0,T)$ and
\[
  \qn{[\phi] }_q
  = \|\;\|\wh\phi(\cdot)\|\;\|_{L^q(0,T)}\,.
\]
It follows that if $\phi$ is Bochner integrable, then we can calculate
the Gelfand integral as a Bochner integral.

Now suppose that $\phi$, $\psi:[0,T]\to X^*$ are weak* integrable, so
that $[\phi]$, $[\psi]\in L^1_{w*}(0,T;X^*)$.  We say that $\psi$ is
the \emph{Gelfand weak derivative} or G-weak derivative of $\phi$,
written $\phi'(t)=\psi(t)$ or $[\phi']=[\psi]$, if
\begin{equation}
\begin{aligned}
  \wint_0^T \phi(t) \, \eta'(t)\; dt
   &= - \wint_0^T \psi(t)\, \eta(t)\; dt, \com{that is}\\
  \int_0^T \< \phi(t),\a\> \, \eta'(t)\; dt
   &= - \int_0^T \< \psi(t),\a\> \, \eta(t) \; dt,
\end{aligned}
\label{w*wd}
\end{equation}
for all $\a\in X$ and scalar functions $\eta \in C^{\infty}_c(0,T)$.

We now define the space $W^{1,q}_{w*}\big(0,T;X^*\big)$, for
$1 \leq q \leq \infty$, to be the set of weak* equivalence classes
$[\phi]\in L^q_{w^*}\big(0,T;X^*\big)$ with G-weak derivative
$[\phi'] \in L^q_{w*}\big(0,T;X^*\big)$, with norm
\[
  \qn{ [\phi] }_{W^{1,q}_{w*}(0,T;X^*)} :=
  \begin{cases}
    \big(\int_0^T (\|\wh\phi(t)\|^q+\|\wh{\phi'}(t)\|^q)\;dt \big)^{1/q} \,,
      & 1\leq q < \infty\\[2pt]
    \esssup_{t\in[0,T]}\big( \|\wh\phi(t)\| + \|\wh{\phi'}(t)\| \big)\,,
      &  q=\infty\,,
  \end{cases}
\]
for  norm-measurable representatives $\wh\phi$ and $\wh{\phi'}$.

If in addition, $\phi$ has values in some $Y\subset X^*$, then we
write $\phi \in W^{1,q}_{w*}\big(0,T;Y,X^*\big)$, that is we set
\[
  W^{1,q}_{w*}\big(0,T;Y,X^*\big)
   = \Big\{ \phi \in W^{1,q}_{w*}(0,T;X^*)\;:\;
       y(t) \in Y,\  t\in[0,T] \Big\}.
\]
Note that we do not assume that $Y$ is a subspace of $X^*$, because we
use the topology of $X^*$ throughout.

In \cite{MY16} we state and prove some basic calculus theorems for the
Gelfand integral, and the interested reader is referred there for
details.  We summarize the main points in the following theorem, which
collects parts of Theorems 3.5 and 3.7 of~\cite{MY16}.

\begin{theorem}
\label{thm:w*FTC}
If $f\in W^{1,q}_{w*}(0,T,X^*)$, then it has an absolutely continuous
representative $\ol f:[0,T] \to X^*$, which satisfies
\begin{equation}
\label{ftc}
  \ol f(t_2) - \ol f(t_1) = \wint_{t_1}^{t_2} f'(s)\;ds,
\end{equation}
for all $t_1$, $t_2\in[0,T]$.
Moreover, for all $\a \in W^{1,p}(0,T;X)$ strongly integrable, we have
the integration by parts formula
\begin{equation}\label{w*intbyparts}
  \int_{t_1}^{t_2} \< f'(t), \a(t) \> \, dt
  = \<\ol f(s),\ol{\a}(s)\> \Big|_{t_1}^{t_2} -
     \int_{t_1}^{t_2} \< f(t), \a'(t) \>\,dt\,.
\end{equation}
\end{theorem}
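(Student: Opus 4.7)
The strategy is to reduce both claims to classical scalar Sobolev theory by pairing with elements of $X$, and then to reconstitute the resulting scalar information into an element of $X^*$ via a Gelfand averaging argument. First I would define $F:[0,T]\to X^*$ as the Gelfand indefinite integral $F(t) := \wint_0^t f'(s)\,ds$. Using \eqref{G-int} and taking the supremum over $\alpha\in X$ with $\|\alpha\|_X\le 1$ gives
\[
\|F(t_2)-F(t_1)\|_{X^*}\le\int_{t_1}^{t_2}\|\wh{f'}(s)\|_{X^*}\,ds,
\]
and since $\|\wh{f'}\|_{X^*}\in L^q(0,T)\subset L^1(0,T)$, this shows that $F$ is absolutely continuous as a map into $X^*$.

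Next I would show that $F$ represents $f$ modulo a constant element of $X^*$. For fixed $\alpha\in X$ and $\eta\in C_c^\infty(0,T)$, the definition \eqref{w*wd} of the G-weak derivative combined with classical scalar integration by parts applied to the absolutely continuous function $t\mapsto\<F(t),\alpha\>$ yields $\int_0^T\<f(t)-F(t),\alpha\>\,\eta'(t)\,dt = 0$, whence the scalar du Bois--Reymond lemma produces a constant $c_\alpha\in\RR$ with $\<f(t)-F(t),\alpha\>=c_\alpha$ for $\lambda$-a.e. $t$. To package the assignment $\alpha\mapsto c_\alpha$ as a bounded linear functional, I would average over $[0,\delta]$:
\[
c_\alpha = \frac{1}{\delta}\int_0^\delta \<f(t)-F(t),\alpha\>\,dt = \left\langle \frac{1}{\delta}\left(\wint_0^\delta f\,dt - \wint_0^\delta F\,dt\right),\,\alpha\right\rangle,
\]
which exhibits $c_\alpha = \<c_0,\alpha\>$ for some $c_0\in X^*$; then $\ol f(t):= F(t)+c_0$ is the desired absolutely continuous representative, and \eqref{ftc} follows at once from the definition of $F$.

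For the integration-by-parts formula \eqref{w*intbyparts}, the cleanest route is to first verify it on elementary tensors $\alpha(t)=\eta(t)\,\beta$ with $\eta\in C^\infty([t_1,t_2])$ and $\beta\in X$, where it collapses to classical integration by parts for the scalar $W^{1,q}(0,T)$ function $t\mapsto\<\ol f(t),\beta\>$, whose derivative is $\<f'(t),\beta\>$ by \eqref{ftc}. Both sides of \eqref{w*intbyparts} are continuous bilinear forms in $(f,\alpha)$ on $W^{1,q}_{w*}(0,T;X^*)\times W^{1,p}(0,T;X)$ thanks to the H\"older bound $|\<f(t),\alpha(t)\>|\le\|\wh f(t)\|_{X^*}\|\alpha(t)\|_X$, so the identity then extends to all strongly integrable $\alpha$ by density of such elementary tensors in $W^{1,p}(0,T;X)$.

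The main obstacle will be the construction of $\ol f$ from the weak* equivalence class $[f]$: the scalar constants $c_\alpha$ are initially defined only after discarding an $\alpha$-dependent null set, and one must reassemble them into a single $c_0\in X^*$ compatible with every $\alpha$ simultaneously. The averaging identity above is the crucial step, since it expresses $c_\alpha$ intrinsically as the pairing of $\alpha$ with a single fixed Gelfand integral (well defined because $\|\wh f\|_{X^*}\in L^q(0,T)\subset L^1(0,T)$ makes $f$ Gelfand integrable on $[0,T]$), thereby sidestepping the $\alpha$-dependence of the exceptional sets.
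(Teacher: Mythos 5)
The paper does not actually prove this statement: \cref{thm:w*FTC} is presented as a summary of parts of Theorems 3.5 and 3.7 of the companion paper [MY16], so there is no in-text proof to compare against. Your argument is correct and is the natural one for this kind of result: defining the indefinite Gelfand integral $F$, showing $f-F$ is weak* equivalent to a constant, and then extending integration by parts from elementary tensors by density. The one genuinely delicate point --- that the du Bois--Reymond constants $c_\alpha$ come with $\alpha$-dependent null sets and must be reassembled into a single element of $X^*$ --- is exactly the crux, and your averaging identity $c_\alpha = \bigl\langle \tfrac{1}{\delta}\bigl(\wint_0^\delta f\,dt - \wint_0^\delta F\,dt\bigr),\alpha\bigr\rangle$ resolves it cleanly, since the right-hand side pairs $\alpha$ against one fixed Gelfand integral independent of the exceptional set. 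The only loose end worth making explicit is the measurability of $t\mapsto\<f'(t),\a(t)\>$ for a general strongly measurable $\a$, but this follows from approximating $\a$ by simple functions together with the bound $|\<f'(t),\a(t)-\a_n(t)\>|\le\|\wh{f'}(t)\|_{X^*}\,\|\a(t)-\a_n(t)\|_X$, and is in any case subsumed by your density argument.
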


\section{Weak* solutions}
\label{sec:w*}

In~\cite{MY16}, the authors introduced the notion of weak* solutions
to a general system of hyperbolic conservation laws in one space
dimension.  Given such a system,
\begin{equation}
  \label{cl}
  \del_tU + \del_xF(U) = 0, \quad U(0,\cdot) = U_0,
\end{equation}
with $U$, $F(U):[0,T]\times\RR\to \RR^n$, recall that a distributional
solution is a locally integrable function $U(t,x)$ satisfying
\[
  \int_0^\infty\int_\RR \big(U(t,x)\,\del_t \varphi(t,x)
    + F(U(t,x))\,\del_x \varphi(t,x)\big)\;dx\;dt
  + \int_\RR U_0(x)\,\varphi(0,x)\;dx = 0,
\]
for all compactly supported test functions $\varphi$, and if in
addition $U$ is locally bounded, it is a weak solution.  We note that
the necessity of explicitly multiplying by test function and
integrating by parts means that calculations are unwieldy and often
error estimates must be employed when analyzing weak solutions.

On the other hand, when considering weak* solutions, we will treat the
conservation law~\cref{cl} as an ODE in an appropriate Banach space.
Indeed, we look at \cref{cl} directly and allow this to act linearly
on the Banach space $X$ which contains $C_c^{\infty}$ as a dense
subspace.  That is, for each $t$, we treat $U(t) = U(t,\cdot)$ and
$\DD_xF(U(t,\cdot))$ as living in $X^*$, and we regard \cref{cl} as an
ODE in $X^*$, so that
\begin{equation}
  \label{ode}
  U' + \DD_xF(U(t,\cdot)) = 0, \quad U(0) = U_0,
\end{equation}
for appropriately defined time derivative $U'$.  The critical issue
for us is to make sense of the nonlinear flux $F(U)$ and its
derivative in the space $X^*$.

We then say that
\[
  U \in 
      W^{1,q}_{w*}\big(0,T^{-};Y_{loc},X^*_{loc}\big)
\]
is a \emph{weak* solution} of the system \cref{cl} if
\[
  U' + \DD_xF(U(t)) = 0 \com{in} L^q_{w*}(0,T;X^*_{loc}),
\]
and if $\ol U(0) = U_0$ in $X^*_{loc}$, where $\ol{U}(t)$ is the continuous
representative of the weak* equivalence class, and where $U'$ is the
G-weak derivative of $U$.  Here $X^*_{loc}$ is understood in the usual
sense and we allow any $1\le q\le \infty$.

In our previous paper~\cite{MY16}, we used $X=C_0(\Omega)^n$, so that
$X^*=M(\Omega)^n$, and we took $Y = BV(\Omega)^n$.  In that paper we
studied the connections between weak* solutions and weak solutions,
and proved the following theorem.

\begin{theorem}\label{thm:weak}
Suppose $U \in W^{1,q}_{w*}\big( 0,T^-; BV_{loc}^n, M^n_{loc}\big)$ is
a weak* solution to the Cauchy problem \cref{cl}, with continuous
representative $\ol U$.  Then $\ol U$ is H\"older continuous as a
function into $L^1_{loc}(\RR;\RR^n)$, that is, $\ol U \in C^{0,\,
  1-1/q}(0,T^-;L^1_{loc})$ for $1\le q\le \infty$.  The function $\ol
U(t,x)$ is a distributional solution of the Cauchy problem \cref{cl}.
In particular, if $U$ is locally bounded, that is $U \in
L^{\infty}_{w*}\big( 0,T^-; L_{loc}^{\infty}(\RR;\RR^n)\big)$, then
$\ol U(t,x)$ is also a weak solution to the Cauchy problem \cref{cl}.
\end{theorem}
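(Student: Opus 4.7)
The plan is to reduce both conclusions to two mechanical applications of the Gelfand calculus collected in \cref{thm:w*FTC}, carried out on an arbitrary bounded open set $\Omega\subset\subset\RR$. Specifically, the fundamental theorem of calculus \cref{ftc} applied to the ODE $U'=-\DD_xF(U)$ yields a norm estimate on $\ol U(t_2)-\ol U(t_1)$ in $M(\Omega)^n$ that immediately translates into the claimed Hölder regularity, while the integration-by-parts identity \cref{w*intbyparts} tested against a smooth compactly supported space-time test function $\varphi$ converts the same ODE into the classical distributional identity. Since $\Omega$ is arbitrary, the local statements on $\RR$ follow.

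For the H\"older estimate, since $\ol U(t)\in BV(\Omega)^n$ and $F$ is locally Lipschitz, $F(\ol U(t))\in BV(\Omega)^n$, and the identifications summarized in \cref{spaces} give
\[
  \|U'(t)\|_{M(\Omega)^n} = \|\DD_x F(\ol U(t))\|_{M(\Omega)^n}
   = \|F(\ol U(t))\|_{BV(\Omega)^n}.
\]
Applying \cref{ftc} followed by H\"older's inequality in time,
\[
  \|\ol U(t_2)-\ol U(t_1)\|_{M(\Omega)^n}
   \le \int_{t_1}^{t_2}\|U'(s)\|_{M(\Omega)^n}\,ds
   \le |t_2-t_1|^{\,1-1/q}\,\qn{U'}_q,
\]
with the usual convention for $q=\infty$. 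Since $\ol U(t_j)\in L^1(\Omega)^n$ and the embedding $\iota:L^1\to M$ from \cref{iota} is norm-preserving, the left-hand side equals $\|\ol U(t_2)-\ol U(t_1)\|_{L^1(\Omega)^n}$, giving the claimed $C^{0,\,1-1/q}(0,T^-;L^1_{loc})$ bound.

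For the distributional identity, fix $\varphi\in C_c^\infty([0,T)\times\RR;\RR^n)$ supported in $[0,T']\times\Omega$ for some $T'<T$. The curve $t\mapsto\varphi(t,\cdot)$ is smooth with values in $C_0(\Omega)^n=X$ and has compact temporal support, so it lies in $W^{1,\infty}(0,T';X)$ strongly. Applying \cref{w*intbyparts} on $[0,T']$, using $\varphi(T',\cdot)=0$ together with the initial condition $\ol U(0)=U_0$ built into the definition of weak* solution, and then substituting the ODE and unfolding the distributional derivative via \cref{def:DX} to rewrite $\<\DD_xF(\ol U(t)),\varphi(t,\cdot)\>=-\<F(\ol U(t)),\del_x\varphi(t,\cdot)\>$, we arrive at
\[
  -\<U_0,\varphi(0,\cdot)\>-\int_0^{T'}\<\ol U(t),\del_t\varphi(t,\cdot)\>\,dt
  =\int_0^{T'}\<F(\ol U(t)),\del_x\varphi(t,\cdot)\>\,dt.
\]
Unfolding each dual pairing as a Lebesgue integral over $\Omega$ produces precisely the distributional-solution identity stated in the excerpt. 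When $U\in L^\infty_{w*}(0,T^-;L^\infty_{loc})$, continuity of $F$ forces $F(\ol U)\in L^\infty_{loc}$, so $\ol U$ is then a weak solution by definition.

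The main hurdle is bookkeeping rather than analysis: one must consistently use the identifications $L^1\hookrightarrow M$ via $\iota$ and $\|\DD_xf\|_{M^n}=\|f\|_{BV^n}$, and keep careful track of which computations take place in $X^*=M^n$ versus in $L^1$. Working throughout in $M^n_{loc}$ and only passing to $L^1_{loc}$ at the very end via $\iota$ keeps the argument clean and, in particular, avoids any need for mollification or explicit error estimates.
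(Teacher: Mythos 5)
Your proof is correct and follows exactly the route the paper intends: the paper only quotes this theorem from the companion work \cite{MY16}, but the Gelfand-calculus tools it imports for precisely this purpose (\cref{thm:w*FTC}) are the ones you deploy, with \cref{ftc} plus H\"older in time giving the $C^{0,1-1/q}$ estimate and \cref{w*intbyparts} tested against smooth space--time test functions, combined with \cref{def:DX} and the initial condition $\ol U(0)=U_0$, giving the distributional identity. The one cosmetic remark is that the identity $\|U'(t)\|_{M^n}=\|F(\ol U(t))\|_{BV^n}$ is not actually needed for the H\"older bound, since $\|U'(\cdot)\|\in L^q$ is already part of the hypothesis $U\in W^{1,q}_{w*}$; otherwise the argument is complete.
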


In the same paper, we showed that a distributional solution with
appropriate bounds is also a weak* solution, and in particular $BV$
weak solutions are weak* solutions.  As an immediate consequence, it
follows that the global weak solutions generated by Glimm's method,
front tracking, and vanishing viscosity, all of which have uniformly
bounded total variation, are all weak* solutions, and the uniqueness
and stability results of Bressan et.al.~hold unchanged in the
framework of weak* solutions.

\subsection{Application to Isentropic Gas Dynamics}

Because of the flexibilty provided by the choices of growth rate $q$
and spaces $Y$ and $X^*$, we regard weak* solutions as more general
than weak solutions.  Indeed, we will generalize weak* solutions to
include the vacuum in a Lagrangian frame, in which local boundedness
is lost and the specific volume is allowed to be a measure.

We work with the system of gas dynamics in a Lagrangian frame, namely
\begin{equation}
\label{psys}
\begin{aligned}
\del_t v - \del_x u &= 0, \\
\del_t u + \del_x p &= 0,
\end{aligned}
\end{equation}
in which the pressure $p$ is specified as a function of specific
volume $v$ by a constitutive relation of the form
\begin{equation}
  p = P(v), \com{that is}
  p(t,x) = P(v(t,x)),
\label{pofv}
\end{equation}
satisfying the appropriate properties: the most common such
constitutive law is that of an ideal gas, for which
$P(v)=A\,v^{-\gamma}$, $\gamma>1$.

It follows immediately that as long as $v$ remains $BV$, then a $BV$
weak* solution can be defined as above.  However, we want to allow
solutions which include vacuums, which are represented by Dirac masses
in a Lagrangian frame.  To do so, we simply allow the specific volume
$v(t)$ to be a Radon measure, which includes all Dirac masses.  We
note that the velocity $u$ remains $BV$, even when $v$ is unbounded
and includes Dirac masses.  We thus extend the target set $Y$ to
include Dirac masses in the first component, while still requiring
that the vector of conserved quantities remain in the set
$W^{1,q}(0,T;M^2_{loc})$.  In order for this extension to make sense,
we must extend the constitutive relation so that the pressure is
defined for any specific volume, which can now be a positive Radon
measure.

The constitutive relation expresses the thermodynamic pressure in
terms of the specific volume, as $p=P(v)$.  This extends naturally to
a map of functions,
\begin{equation}
  P:\mc A\subset L^1(\Omega)\to L^1(\Omega)  \com{via}
  p = P\circ v:\Omega\to\RR,
\label{Pfunc}
\end{equation}
where $\mc A$ is the domain of $P$, and allows us to close
\cref{psys}.  We now wish to extend this constitutive map to be
defined on Radon measures, and use this to define vacuum solutions of
\cref{psys}, which will include Dirac masses which represent vacuums.

Recalling the Lebesgue decomposition, in the notation of \cref{Pi},
\cref{iota}, we write the measure $\mu\in M_{loc}(\RR)$ as
\[
  \mu = \mu_c + \mu_s,\com{with}
  \mu_c \ll \lambda, \quad
  \mu_s \perp \lambda,
\]
so that for any Borel set $A$,
\[
  \mu(A) = \int_A \Pi(\mu)\;dx + \mu_s(A),
\]
where $\Pi(\mu) = \frac{d\mu_c}{d\lambda}\in L^1_{loc}(\RR)$ is the
Radon-Nikodym derivative of the absolutely continuous part of $\mu$.
Since the Lebesgue decomposition is unique, we extend the constitutive
function to be defined on positive measures $M_{loc}(\RR)_+$ by
\[
  \wh P:{M_{loc}(\RR)}_+ \to L^1_{loc}(\RR), \com{by}
  \wh P(\mu) = P(\Pi(\mu)),
\]
since pressure vanishes at vacuum.  That is, we declare that the
singular part of the specific volume makes no contribution to the
pressure.

When generalizing the specific volume to a positive measure, we use
the following notation: given $V\in M_{loc}$ and referring to
\cref{Pi}, \cref{iota}, we write
\[
  v := \Pi(V) \com{and}  \nu := V - \iota(v),
\]
so that $V = \iota(v) + \nu$, with $v\in L^1_{loc}$ and
$\nu\perp\lambda$.  It then follows that the (generalized) pressure is
\[
  \wh P(V) = \wh P(\iota(v)+\nu) = P(v),
\]
so that, as expected, the generalized pressure is the composition of
the pressure function with the Radon-Nikodym derivative $v$ of the
absolutely continuous part of the measure $V$.

As a first attempt at defining a solution with vacuum, we again take
$X$ to be the set of continuous test functions, $X = C_0(\Omega)^2$,
and we set
\[
  \wt Y_{loc} = M_{loc}(\RR)_+ \times BV_{loc}(\RR)
        \subset M_{loc}(\RR)^2 =: X^*_{loc},
\]
where $M_{loc}(\RR)_+$ denotes Radon measures that are (strictly)
positive on all open sets, so that
\[
  \mu \in {M_{loc}}(\RR)_+ \com{iff}
  \mu\big((a,b)\big) > 0 \quad \forall\  a<b.
\]

\begin{definition}
A \emph{vacuum weak* solution} of the $p$-system
\cref{psys} is a pair
\[
  (V,u) \in W^{1,q}_{w*}(0,T;\wt Y_{loc},X^*_{loc}),
\]
satisfying
\begin{equation}
\begin{aligned}
   &  V' - \DD_x u = 0 \\
   &  u' + \DD_x\wh P(V) = 0
\end{aligned} \qquad \text{in $L^q_{w*}(0,T;X^*_{loc})$,}
\label{Veq}
\end{equation}
where $'$ denotes the G-weak derivative.  When
solving a Cauchy problem, the Cauchy data $(V_0,u_0)$ must be taken on
in the space $X^*$ by the time-continuous representative $(\ol V,\ol u)$,
that is
\[
  \big(\ol V(0),\ol u(0)\big) = \big(V_0,u_0\big) \com{in} X^*_{loc}.
\]
\end{definition}

\subsection{Properties of  Solutions with Vacuum}

As in the general case of $BV$ weak* solutions, we immediately observe
that vacuum weak* solutions have some implicit regularity: first, the
solutions have an absolutely continuous representative $\big(\ol
V(t),\ol u(t)\big)\in X^*$.  Also, since the flux $(-u,p)$ has a
distributional derivative in $X^* = M_{loc}^2$, both $\ol u(t)$ and
$p(t) = \wh P(\ol V(t))$ are $BV$ functions (of material variable $x$)
for all~$t$.

\subsubsection{Evolution of Atomic Measures}

Next, recalling that $x$ is a material rather than spatial variable,
we show that vacuums are stationary in a Lagrangian frame.

\begin{lemma}
\label{lem:dirac}
A nontrivial continuous Dirac measure is stationary: that is, a
measure
\[
  \mu:(a,b) \to M(\Omega)  \com{of the form}
  \mu = w(t)\,\delta_{X(t)} \in M(\Omega),
\]
with $w\ne 0$ and $X:(a,b)\to\Omega$, is continuous on the interval
$(a,b)$ if and only if $w(t)$ is continuous and $X(t)$ is constant on
$(a,b)$.
\end{lemma}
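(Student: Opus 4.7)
The plan is to work directly with the total variation norm on $M(\Omega)$, which is the relevant topology since $M(\Omega) = C_0(\Omega)^*$ with $\|\mu\|_{M(\Omega)} = |\mu|(\Omega)$. The backward implication is immediate: if $X(t)\equiv X_0$ and $w$ is continuous on $(a,b)$, then
\[
  \big\|\mu(t)-\mu(s)\big\|_{M(\Omega)}
   = |w(t)-w(s)|\,\|\delta_{X_0}\|_{M(\Omega)} = |w(t)-w(s)|,
\]
which tends to zero as $s\to t$. For the forward implication, the key observation is that atomic measures supported at distinct points are mutually singular, so whenever $X(s)\neq X(t)$,
\[
  \big\|\mu(t)-\mu(s)\big\|_{M(\Omega)} = |w(t)|+|w(s)|.
\]

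Given this identity, I would fix $t_0\in(a,b)$ and argue by contradiction. If there were a sequence $s_n\to t_0$ with $X(s_n)\neq X(t_0)$, continuity of $\mu$ would force $|w(t_0)|+|w(s_n)|\to 0$, contradicting the standing hypothesis $w(t_0)\neq 0$. Hence $X\equiv X(t_0)$ on some neighborhood of $t_0$, and since $t_0$ was arbitrary, $X$ is locally constant on the connected interval $(a,b)$. A standard clopen argument then upgrades this to $X$ being globally constant: the level set $\{t:X(t)=X(t_0)\}$ is open by local constancy, and its complement is open by the same argument applied at any point outside it, so by connectedness the level set is all of $(a,b)$. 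Once $X\equiv X_0$ is established, continuity of $\mu$ reduces to $|w(t)-w(s)|\to 0$, giving continuity of $w$.

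The only subtle point is the interpretation of the hypothesis ``$w\neq 0$'', which I take to mean $w(t)\neq 0$ for every $t\in(a,b)$, consistent with the qualifier \emph{nontrivial} in the statement. Without this, the contradiction in the forward direction collapses at zeros of $w$, and the conclusion would in fact fail there, since $X$ could jump arbitrarily at such times without affecting $\mu$. Beyond this interpretive point, the argument requires no machinery beyond uniqueness of the representation of a purely atomic measure and the connectedness of $(a,b)$; no test-function or Gelfand-integral computation is needed, since the result lives entirely at the level of the norm on $M(\Omega)$.
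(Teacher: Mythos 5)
Your proof is correct and follows essentially the same route as the paper's: the same total-variation identity $\|\alpha\,\delta_x-\beta\,\delta_y\|_{M(\Omega)}=(|\alpha|+|\beta|)\,\11_{\{x\neq y\}}+|\alpha-\beta|\,\11_{\{x=y\}}$, the same contradiction using $w(t_0)\neq 0$ to get local constancy of $X$, and an equivalent globalization step (your clopen argument versus the paper's maximal-interval argument). Your reading of ``$w\neq 0$'' as $w(t)\neq 0$ for all $t$ matches what the paper's proof implicitly uses.
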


\begin{proof}
Recalling that $\|\delta_x\|_{M}=1$, it follows easily that for $x$,
$y\in\Omega$, and $\alpha$, $\beta\in\RR$,
\[
    \|\alpha\,\delta_x - \beta\,\delta_y \|_{M(\Omega)}
    = (|\alpha|+|\beta|) \, \11_{ \{ x \neq y \} } + |\alpha-\beta|\, \11_{ \{ x = y \} },
\]
where $\11_E$ is the indicator function on $E$.

It follows immediately that if $w(t)$ is continuous on $(a,b)$, then
so is the stationary measure $\mu(t) = w(t)\,\delta_{X_0}$, for any
$X_0\in\Omega$.

Similarly, for $t$, $t_0 \in (a,b)$, we have
\begin{equation}
\begin{aligned}
\label{mudiff}
  \big\|\mu(t)-\mu(t_0)\big\|_{M(\Omega)}
  = \big(|w(t)|+|w(t_0)|\big) \; \11_{ \{ X(t) \neq X(t_0) \}}
     +|w(t)-w(t_0)|\; \11_{ \{ X(t) = X(t_0) \} },
\end{aligned}
\end{equation}
both terms being non-negative.  Now if $\mu\in M(\Omega)$ is
continuous at $t_0$, then
\[
  \|\mu(t) - \mu(t_0)\|_{M(\Omega)} \to 0, \com{as}
  t\to t_0,
\]
so, since $|w(t_0)|\ne 0$, \cref{mudiff} implies both
\[
  \lim_{t\to t_0}  \11_{ \{ X(t) \neq X(t_0) \}}=0 \com{and}
  \lim_{t\to t_0} |w(t)-w(t_0)|\; \11_{ \{ X(t) = X(t_0) \}}=0.
\]
It follows that, given any $\epsilon>0$, there exists $\eta>0$ such
that $X(t)=X(t_0)$ for all $t \in (t_0-\eta,t_0+\eta)$, and moreover
\[
  |w(t)-w(t_0)| = \big\|\mu(t)-\mu(t_0)\big\|_{M(\Omega)} < \epsilon
  \com{for} |t-t_0|<\eta.
\]
Since $t_0$ is an arbitrary point in $(a,b)$, $w(t)$ is continuous on
$(a,b)$.  Finally, let $(c,d)\subset(a,b)$ be the maximal interval for
which $X(t)=X(t_0)$ for all $t \in (c,d)$.  If $c>a$, find another
$\epsilon_1$ so that $X(t) = X(c)$ for $t\in(c-\epsilon_1,c+\epsilon_1)$ to obtain
a contradiction; this implies $c=a$.  Similarly, $d=b$ and the result
follows.
\end{proof}

Note that in other topologies such as the Wasserstein distance used in
mass transfer problems, continuity need not imply that singular
measures are stationary.

\subsubsection{Evolution of Unbounded Maps}

We next show that integrable functions $f\in W^{1,q}_{w*}$ which are
unbounded blow up on stationary sets, consistent with vacuums being
stationary in a material coordinate.

To this end, let $\Omega=(c,d) \subset \RR$ and $X \in
C^1\big([a,b],\Omega\big)$, so that the curve
\begin{equation}\label{Ccurve2}
  \mc{C}=\Big\{(x,t)\;:\; t \in [a,b],\  x=X(t) \Big\}
  \subset [a,b] \times \Omega,
\end{equation}
and let $\gamma>0$ be such that
\[
  \gamma < \sup_{t \in [a,b]}\big(\min\big\{|X(t)-c|,\;|X(t)-d|\big\}\big).
\]
Also suppose that the function $f(t,x):(a,b)\times\Omega \to
(0,\infty)$ is continuous at each point of the set $\big( (a,b) \times
\Omega \big)\backslash \mathcal{C}$, that the possibly infinite
one-sided limits $\lim_{x \to X(t)^\pm} f(t,x)$ exist for each $t \in
(a,b)$, and that for some $1 < q \leq \infty$, the map
\begin{equation}\label{fconds}
  t\to f(t ,\cdot) \in W^{1,q}_{w*}\big(a,b ; L^1(\Omega), M(\Omega) \big).
\end{equation}

Denote the sets on which $f$ is unbounded by
\[
  S_{\infty}^\pm = 
  \Big\{t \in [a,b]\;:\;\lim_{x \to X(t)^\pm} f(t,x) = \infty \Big\}
  \com{and}
  S_{\infty} = S_{\infty}^- \bigcup S_{\infty}^+\,.
\]
We first show that $f$ is almost uniformly unbounded on the set
$S_\infty$, in the sense of~\cite{Zakon}.

\begin{lemma}
\label{lem:essunifconv}
The sets $S_{\infty}^+$, $S_{\infty}^-$, and $S_{\infty}$ are
measurable, and for any $\eta>0$, there are measurable sets
$A^\pm \subset S_\infty^\pm$, with
\[
  \lambda(S_{\infty}^-\backslash A^-)<\eta \com{and}
  \lambda(S_{\infty}^+\backslash A^+)<\eta,
\]
such that for every $m \in \NN$, there exists $\delta_m > 0$ such that
\[
  \essinf_{t \in A^\pm} f\big(t,X(t)\pm\e\big) > m \com{for all}
  0<\e<\delta_m\,.
\]
\end{lemma}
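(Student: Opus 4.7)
The plan is to establish measurability by a standard $G_\delta$/$F_\sigma$-type description and then to apply an Egorov-style almost-uniform convergence argument on the bounded set $S_\infty^\pm$.

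First I would observe that because $f$ is continuous on $\big((a,b)\times\Omega\big)\setminus\mc C$ and $X\in C^1$, the map
\[
  (t,\e)\ \longmapsto\ f\big(t,X(t)\pm\e\big)
\]
is jointly continuous on $(a,b)\times(0,\gamma)$, since the restriction $\e>0$ keeps the argument off $\mc C$. In particular, for each fixed $\e$ this is a continuous function of $t$. For each $M,k\in\NN$ set
\[
  C^\pm_{M,k}\ :=\ \Big\{t\in(a,b):f\big(t,X(t)\pm\e\big)\geq M\ \text{for all }\e\in\mathbb{Q}\cap(0,1/k)\Big\}.
\]
As a countable intersection over rational $\e$ of closed level sets of continuous functions of $t$, each $C^\pm_{M,k}$ is measurable, and continuity in $\e$ upgrades the defining condition to hold for \emph{every} real $\e\in(0,1/k)$. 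One then checks
\[
  S_\infty^\pm\ =\ \bigcap_{M\in\NN}\bigcup_{k\in\NN}C^\pm_{M+1,k},
\]
the shift from $M$ to $M+1$ restoring the strict inequality ``$>M$'' demanded by the definition of $S_\infty^\pm$. Therefore $S_\infty^\pm$ is measurable, and so is $S_\infty=S_\infty^-\cup S_\infty^+$.

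For the approximation, fix $\eta>0$. For each $M$ the family $(C^\pm_{M+1,k})_k$ is increasing in $k$, and its union contains $S_\infty^\pm$, since by definition of the one-sided limit, every $t\in S_\infty^\pm$ lies in $C^\pm_{M+1,k}$ as soon as $1/k$ is smaller than the $\delta$ associated to $M+1$ at that $t$. Using the finiteness of $\lambda\big((a,b)\big)$ and continuity of measure from above, select $k_M\in\NN$ with
\[
  \lambda\big(S_\infty^\pm\setminus C^\pm_{M+1,k_M}\big)<\eta\,2^{-M},
\]
and set
\[
  A^\pm\ :=\ S_\infty^\pm\cap\bigcap_{M\in\NN}C^\pm_{M+1,k_M},\qquad \delta_M\ :=\ 1/k_M.
\]
Then $\lambda(S_\infty^\pm\setminus A^\pm)\leq\sum_{M}\eta\,2^{-M}=\eta$, and for each $M$ and each $\e\in(0,\delta_M)$, every $t\in A^\pm$ satisfies $f\big(t,X(t)\pm\e\big)\geq M+1$. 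Since for fixed $\e>0$ the function $t\mapsto f(t,X(t)\pm\e)$ is continuous on $(a,b)$, the pointwise lower bound passes to the essential infimum, yielding $\essinf_{t\in A^\pm}f\big(t,X(t)\pm\e\big)\geq M+1>M$ for every $\e\in(0,\delta_M)$, as required.

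The delicate point I would take extra care with is the strict-vs-non-strict and rational-vs-real dichotomy: measurability only survives countable operations, so the level tests must be phrased over $\mathbb{Q}\cap(0,1/k)$, and the passage to arbitrary continuous $\e$ loses strict inequality, which is restored by the shift $M\mapsto M+1$. Note that the hypothesis $t\mapsto f(t,\cdot)\in W^{1,q}_{w*}$ is not invoked in this lemma; I expect it to be reserved for subsequent results which exploit absolute continuity in time.
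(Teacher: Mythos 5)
Your proof is correct, but it takes a more self-contained route than the paper. For measurability, the paper writes $S_\infty^\pm$ as $\bigcap_{k}\bigcup_{N}\bigcap_{n\ge N}\{t: g^\pm_{1/n}(t)>k\}$ with $g^\pm_\e(t)=f(t,X(t)\pm\e)$, testing divergence only along the sequence $\e=1/n$ (which suffices because the one-sided limits are assumed to exist); your representation $\bigcap_M\bigcup_k C^\pm_{M+1,k}$ tests on all rationals in $(0,1/k)$ and then upgrades by continuity in $\e$, which is slightly more robust since it does not presuppose existence of the limit. The real divergence is in the second half: the paper converts the statement to $1/g^\pm_\e(t)\to 0$ and invokes Zakon's theorem on almost uniform convergence of \emph{families} of functions indexed by a continuous parameter, which directly hands back the sets $A^\pm$ and the $\delta_m$; you instead carry out the Egorov-type exhaustion by hand, choosing $k_M$ with $\lambda(S_\infty^\pm\setminus C^\pm_{M+1,k_M})<\eta\,2^{-M}$ and intersecting. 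Your construction is elementary and avoids the external reference, at the cost of a page of bookkeeping that Zakon's result packages for free; it also makes transparent why the continuity of $g^\pm_\e$ in $t$ is only needed for measurability, not for the essential-infimum bound (a pointwise bound on $A^\pm$ already dominates the essential infimum, so your appeal to continuity in the last step is superfluous). Two cosmetic points: the selection of $k_M$ uses continuity of measure from below for the increasing sets $C^\pm_{M+1,k}$ (equivalently, from above for the decreasing differences, which is where finiteness of $\lambda((a,b))$ enters), and the union over $k$ should start at $k>1/\gamma$ so that $g^\pm_\e$ is defined for all $\e\in(0,1/k)$, exactly as the paper starts its union at $N=\lceil 1/\gamma\rceil$. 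Neither affects the validity of the argument.
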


\begin{proof}
For $\e \in (0,\gamma)$, the functions
\[
  g^-_{\e}(t) = f(t,X(t)-\e) \com{and}
  g^+_{\e}(t) = f(t,X(t)+\e)
\]
are defined and continuous on all of $[a,b]$.

We have
\[
   S_{\infty}^\pm =
   \Big\{t \in (a,b):  \lim_{\e \to 0^+} g_{\e}^\pm(t) = \infty \Big\},
\]
so we can write this as
\[
   S_{\infty}^\pm =
   \bigcap_{k=1}^\infty\left\{\bigcup_{N=\lceil 1/\gamma\rceil}^{\infty}
   \bigcap_{n=N}^{\infty}\Big\{t \in [a,b]\;:\; g_{\frac{1}{n}}^\pm(t)>k
   \Big\} \right\},
\]
and continuity of $g^\pm_{\frac1n}$ yields measurability of $S_\infty^\pm$.

Now take any $\eta>0$.  By assumption $g_{\e}^->0$ on $(a,b)$ for
every $\e\in(0,\gamma)$, so we can write
\[
  S_{\infty}^\pm = \Big\{ t \in(a,b)\;: \;
    \lim_{\e \to 0^+}\frac{1}{g_{\e}^\pm(t)} = 0 \Big\}\,.
\]
By Zakon~\cite{Zakon}, there exist measurable sets $A^\pm \subset
S_{\infty}^\pm$, with $\lambda(S_{\infty}^\pm\backslash A^\pm)<\eta$,
such that for every  $m \in \NN$ there exists $\delta_m>0$ such that
\[
  \esssup_{t \in A^\pm} \frac{1}{g_{\e}^\pm(t)} < \frac{1}{m}
  \com{for all} 0<\e < \delta_m,
\]
and the proof follows.
\end{proof}

We next show that if the discontinuity $X(t)$ is non-stationary, then
$f$ is bounded almost everywhere along $\mc C$.

\begin{lemma}
\label{lem:posspeedcurve}
Let $\sigma>0$ and suppose that the curve $\mathcal{C}$ given in
\cref{Ccurve2} satisfies $X'>\sigma$ on $(a,b)$.  If $f$ satisfies the
conditions \cref{fconds} given above, then
\[
  \lambda(S^+_{\infty})=\lambda(S^-_{\infty})=\lambda(S_{\infty})=0\,.
\]
The same conclusion holds if $X'<-\sigma$ on $(a,b)$.
\end{lemma}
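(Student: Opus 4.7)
The plan is to argue by contradiction, reducing to the case where $\lambda(S_\infty^+) > 0$ (the case $X'<-\sigma$ follows by reversing the $x$-orientation, and $S_\infty = S_\infty^+ \cup S_\infty^-$). Suppose then that $\lambda(S_\infty^+) > 0$. By \cref{lem:essunifconv}, I fix a measurable $A\subseteq S_\infty^+$ of positive measure and a sequence $\delta_m\downarrow 0$ for which $f(t,X(t)+\eps)>m$ for a.e.\ $t\in A$ and every $\eps\in(0,\delta_m)$. The hypothesis $X'>\sigma$ is then exploited through the fact that the ``blow-up bumps'' $I_i(t):=\big(X(t_i),\,X(t_i)+\delta_m\big)$ at partition times $t_i := a + ih_m$ with $h_m := 2\delta_m/\sigma$ are pairwise disjoint, since $X(t_{i+1})-X(t_i)\ge \sigma h_m = 2\delta_m$.

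For each consecutive pair $(t_i,t_{i+1})$ with both endpoints in $A$, I test $f(t_i)-f(t_{i+1})$ against an antisymmetric $\psi_i\in C_c^\infty(\Omega)$ approximating $\chi_{I_i}-\chi_{I_{i+1}}$ with $\|\psi_i\|_\infty\le 1$. Since $f>m$ on each bump at its corresponding time, this produces the lower bound
\[
  \big\|f(t_i)-f(t_{i+1})\big\|_{M}
  \;\ge\; 2m\,\delta_m \;-\; R_i, \qquad
  R_i := \int_{I_{i+1}} f(t_i,x)\,dx + \int_{I_{i}} f(t_{i+1},x)\,dx.
\]
To estimate $\sum_i R_i$, I exploit that, because the centers $X(t_{i+1})$ are at least $2\delta_m$ apart, each $x\in\Omega$ lies in at most one bump $I_{i+1}$; a random shift of the partition together with $\|f(\cdot)\|_{L^1}\in L^q(a,b)$ yields (for a suitable choice of partition origin) $\sum_i R_i \lesssim \|f\|_{L^q_{w*}(L^1)}$, independently of $m$ and $\delta_m$. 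A Chebyshev-type argument then shows that, for $m$ large, at least a fixed positive fraction of the $N\sim (b-a)/h_m$ partition pairs are ``good,'' in the sense that both endpoints lie in $A$ and $R_i\le m\delta_m$, so that $\|f(t_i)-f(t_{i+1})\|_M\ge m\delta_m$ for these indices.

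Summing these bounds and applying the Gelfand fundamental theorem of calculus from \cref{thm:w*FTC},
\[
  c\, m\,\sigma\,(b-a) \;\le\; \sum_i \big\|f(t_{i+1})-f(t_i)\big\|_{M}
  \;\le\; \sum_i \int_{t_i}^{t_{i+1}}\!\|f'(s)\|_{M}\,ds
  \;=\; \int_a^b \|f'(s)\|_{M}\,ds
  \;\le\; (b-a)^{1-1/q}\,\qn{f'}_q,
\]
where the last step uses H\"older's inequality and $q>1$. The right-hand side is finite and independent of $m$, so letting $m\to\infty$ yields a contradiction and forces $\lambda(S_\infty^+)=0$; an identical argument applied in the $-\eps$ direction handles $S_\infty^-$, and together with $S_\infty = S_\infty^+\cup S_\infty^-$ this completes the proof.

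The main obstacle, and the place where the hypotheses $X'>\sigma$ and $f\in W^{1,q}_{w*}$ with $q>1$ must interact, is the control of the correction integrals $R_i$: without the geometric sparseness of bumps coming from $X'>\sigma$, the naive bound $\sum R_i\le\sum\|f(t_i)\|_{L^1}$ is too crude by a factor of $1/\delta_m$, and without $q>1$ the telescoping FTC estimate loses the Hölder improvement that makes the upper bound $m$-independent.
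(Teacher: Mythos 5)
Your high-level strategy---contradiction, the almost-uniform blow-up from \cref{lem:essunifconv}, using $X'>\sigma$ to separate the blow-up regions at different times, and playing a lower bound of order $m$ against an $m$-independent bound coming from the $W^{1,q}_{w*}$ regularity---is the right one, and your discrete telescoping via \cref{thm:w*FTC} is morally a Riemann-sum version of the paper's use of the integration-by-parts formula \cref{w*intbyparts}. The genuine gap is the control of the cross terms $\sum_i R_i$. Each $R_i$ integrates the time slice $f(t_i,\cdot)$ over the \emph{foreign} bump $I_{i+1}$ (and vice versa). Pairwise disjointness of the bumps only converts a sum of bump-integrals of a \emph{single} function into one $L^1$ norm; here every term involves a different time slice, so disjointness buys nothing, and the random shift does not rescue it: averaging over the shift $s$ and substituting $\tau=t_i+s$ turns the generic term into $\frac1{h_m}\int_{t_i}^{t_{i+1}}\int_{X(\tau+h_m)}^{X(\tau+h_m)+\delta_m}f(\tau,x)\,dx\,d\tau$, where for each fixed $\tau$ you integrate $f(\tau,\cdot)$ over one specific interval of length $\delta_m$ whose position is determined (not randomized) by $\tau$. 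The hypotheses \cref{fconds} control only the time-integrated $L^1$ norm and give no equi-integrability in $x$, so nothing prevents $\int_{I_{i+1}}f(t_i,x)\,dx$ from being comparable to $\|f(t_i)\|_{L^1}$ for many $i$; the resulting bound on $\sum_iR_i$ is then of order $1/h_m\sim 1/\delta_m$, and since $\delta_m$ may tend to $0$ much faster than $1/m$, this can swamp the main term of order $m$. Even the benign model $f(t,x)=g(x-X(t))$ with $g(y)=y^{-1/2}$ gives $\sum_iR_i$ of the \emph{same} order as the main term, so constants do not save you. (Two smaller issues: you also need a positive fraction of \emph{consecutive} pairs $t_i,t_{i+1}$ to land in $A$, which requires something like $\lambda(A\cap(A-h_m))\to\lambda(A)$; and the final H\"older step is not actually where $q>1$ matters, since $\int_a^b\|f'(s)\|\,ds$ is finite and $m$-independent already for $q=1$.)

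The paper avoids the cross-term problem entirely by using a single test function moving with the curve, $\psi_m(t,x)=\phi_m(x-X(t))$, with an \emph{asymmetric} profile: $\phi_m$ decays over the short scale $\delta_m$ on the blow-up side and over the fixed scale $\gamma$ on the other side. In \cref{w*intbyparts} the term $\<f(t),\del_t\psi_m(t)\>=-X'(t)\int f(t,x)\,\phi_m'(x-X(t))\,dx$ then pairs the large values of $\phi_m'$ (of size $1/\delta_m$) exactly with the region where $f(t,\cdot)>m$ \emph{at the same time} $t$, producing the main term of size $\gtrsim\sigma\,m\,\lambda(S_\infty^-)$ with no foreign-bump contributions, while the gentle side contributes only $O(1/\gamma)$ times the time-integrated $L^1$ norm, and the terms involving $f'$ pair against $\psi_m$ itself, which has sup-norm $1$. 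To close your argument you would need to redesign the test object so that, for each pair of times, the sharp part of the test function sees only the blow-up of $f$ at its own time; as written the proposal does not close.
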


\begin{proof}
We shall obtain a contradiction by constructing a sequence of test
functions $\psi_m\in W^{1,\infty}\big(a,b; C_0(\Omega)\big)$ for which
one side of the integration by parts formula \cref{w*intbyparts} is
unbounded, while the other remains bounded.

Without loss of generality, we assume that that
$\lambda(S^-_{\infty})>0$ and $X'>\sigma>0$ on $(a,b)$.  According to
\cref{lem:essunifconv}, there exists a set $A^-$ with
$\lambda(A^-)>\frac{1}{2} \lambda(S_{\infty}^-)$, such that for every
$m \in \NN$ there exists $0<\delta_m<\gamma$ such that
\begin{equation}
  \label{fest}
  \essinf_{t \in A^-} f(t,X(t)-\e) > m \com{for all} 0 < \e < \delta_m\,.
\end{equation}

Let $\varphi:[0,\infty)\to \RR$ be a $C^1$ monotone function such that
$\varphi(x) = 1$ for $x \le \frac18$, $\varphi(x) = 0$ for $x \ge
\frac78$, $\varphi'(x) = -\frac32$ for $\frac14 \le x \le \frac34$,
and $-\frac32\le \varphi'(x)\le 0$ elsewhere.  For each $m \in \NN$,
define
\[
  \phi_m(x) = \begin{cases}
      \varphi(-x/\delta_m), & x \le 0, \\
      \varphi(x/\gamma), & x \ge 0,
  \end{cases}
  \com{and}
  \psi_m(t,x) = \phi_m(x-X(t)),
\]
so that $\psi_m \in C^1(\mc U)$.  Moreover, $\psi_m(t,\cdot)$ and
$\del_t\psi_m(t,\cdot)$ are in $C_0(\Omega)$ for each $t \in (a,b)$,
so the map $t \to \psi_m(t,\cdot)$ belongs to $W^{1,\infty}\big(a,b;
C_0(\Omega)\big)$.

We now use $f$ and $\psi_m$ in the integration by parts formula
\cref{w*intbyparts}: first,
\[
\begin{aligned}
  -\int_a^b \< f(t), \psi'_m(t)\> \; dt
  &= \int_a^b X'(t)\int_{c}^{X(t)} f(x,t)\,\phi'_m(x-X(t)) \;dx\;dt \\
  &\qquad  + 
  \int_a^b X'(t)\int_{X(t)}^{d} f(x,t)\,\phi_m'(x-X(t)) \;dx\;dt\\
   & =: I_1 + I_2\,.
\end{aligned}
\]
By changing variables and using Fubini's Theorem and
\cref{fest}, we get
\[
\begin{aligned}
  I_1 & \geq 
   \int_a^b\sigma\int_{X(t)-3\,\delta_m/4}^{X(t)-\delta_m/4}
     f(t,x)\,\frac{3}{2\,\delta_m}\; dx \; dt \\
   &= \frac{3\,\sigma}{2\,\delta_m} 
          \int_{-3\,\delta_m/4}^{-\delta_m/4}
           \int_a^b f(t,X(t)+\e)\;dt \; d\e\\
   &\ge \frac{3\,\sigma}{4}\,m\,\lambda(A^-)
   \ge \frac{3\,\sigma}{8}\,m\,\lambda(S_\infty^-).
\end{aligned}
\]
Next, setting $\ol{\sigma}=\sup_{t \in (a,b)} X'(t)$ and using
$-\frac3{2\,\gamma} \le \phi_m'(x) \le 0$ for $x\ge 0$, we have
\[
\begin{aligned}
     0 \leq - I_2 & \leq \frac{3\,\ol{\sigma}}{2\,\gamma}
          \int_a^b \int_c^d f(x,t) \; dx\; dt \\
   & \leq \frac{3\,\ol{\sigma}}{2\,\gamma}\int_a^b \bigg(1+\Big(\int_c^d f(x,t) \, dx\Big)^q \bigg) \; dt\\
   &  = \frac{3\,\ol{\sigma}}{2\,\gamma}\Big((b-a)+ \|f\|_{L^q(a,b; L^1(\Omega))}\Big)\,,
\end{aligned}
\]
which is bounded.  Using $\|\phi_m\|_\infty=1$, we estimate the other
terms in \cref{w*intbyparts} by
\[
\begin{gathered}
  \<\ol{f},\ol{\psi}_m\>\Big|_a^b \le
  \big\|\ol{f}(b)\big\|_{L^1(\Omega)} +
  \big\|\ol{f}(a)\big\|_{L^1(\Omega)} , \com{and}\\
  \Big| \int \<f'(t),\psi_m(t)\> \; dt \Big| \le
  \big\|f'\big\|_{L^q_{w*}(a,b; L^1(\Omega))}\;(b-a)^{\frac{1}{p}},
\end{gathered}
\]
which are also bounded.  Since $m$ is arbitrary, we have a
contradiction and the lemma is proved.
\end{proof}

\begin{corollary}\label{cor:singsetprop}
For $\mathcal{C}$ as in \cref{Ccurve2} and $f$ as in \cref{fconds},
$f$ blows up on an essentially stationary set, that is
\begin{equation}\label{emb2}
  \lambda\big(S_{\infty}\backslash Z_X\big)=0, \com{where}
  Z_X  := \Big\{t \in [a,b]:  X'(t)=0 \Big\}\,.
\end{equation}
\end{corollary}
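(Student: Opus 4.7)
The plan is to reduce to Lemma \ref{lem:posspeedcurve} by a countable exhaustion argument on the open set where $X' \neq 0$. The key observation is that the complement of $Z_X$ in $[a,b]$ splits naturally into regions where $X'$ is bounded away from zero with a fixed sign, and on each such region Lemma \ref{lem:posspeedcurve} already gives the desired conclusion.

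More concretely, I would first write
\[
  (a,b)\setminus Z_X = \bigcup_{n\in\NN}\bigl( U_n^+ \cup U_n^- \bigr),
  \qquad
  U_n^\pm := \bigl\{ t\in(a,b)\;:\;\pm X'(t)>\tfrac{1}{n}\bigr\}.
\]
Since $X\in C^1([a,b],\Omega)$, the derivative $X'$ is continuous, so each $U_n^\pm$ is open and hence a countable disjoint union of open intervals. Fix one such component $(c,d)\subset U_n^+$ (the $U_n^-$ case is symmetric, as noted in Lemma \ref{lem:posspeedcurve}), and take any compact subinterval $[a',b']\subset (c,d)$. On $[a',b']$ the curve satisfies $X'>1/n$, and the restricted map $t\mapsto f(t,\cdot)$ still lies in $W^{1,q}_{w*}\bigl(a',b';L^1(\Omega),M(\Omega)\bigr)$ because the norms defining \cref{fconds} only decrease under restriction. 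Lemma \ref{lem:posspeedcurve} then applies with $\sigma=1/n$, yielding $\lambda\bigl(S_\infty\cap[a',b']\bigr)=0$.

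Exhausting $(c,d)$ by an increasing sequence of such $[a',b']$ and using countable additivity gives $\lambda\bigl(S_\infty\cap(c,d)\bigr)=0$ on each component, hence $\lambda(S_\infty\cap U_n^+)=0$. Summing over $n$ and the analogous contribution from $U_n^-$ produces
\[
  \lambda\bigl(S_\infty\setminus Z_X\bigr)
  \leq \sum_{n=1}^\infty \bigl(\lambda(S_\infty\cap U_n^+)
     + \lambda(S_\infty\cap U_n^-)\bigr) = 0,
\]
which is precisely \cref{emb2}.

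The only mild subtlety, and what I would flag as the main thing to verify carefully, is that restricting $f$ to a smaller time interval $[a',b']\subset [a,b]$ preserves the hypothesis $t\mapsto f(t,\cdot)\in W^{1,q}_{w*}(a',b';L^1(\Omega),M(\Omega))$, together with the one-sided limit and continuity conditions away from $\mc C$; this is a routine consequence of the definition of the Gelfand-Sobolev norm and the pointwise nature of the continuity hypotheses on $f$, but it is what legitimizes the direct appeal to Lemma \ref{lem:posspeedcurve} on each sub-interval. No additional estimates or test-function constructions beyond those already in that lemma are needed.
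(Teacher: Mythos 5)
Your proof is correct and follows essentially the same route as the paper: both reduce to \cref{lem:posspeedcurve} by using continuity of $X'$ to localize to time intervals on which $|X'|$ is bounded below by some $\sigma>0$. The paper phrases this as a contradiction (a positive-measure intersection of $S_\infty$ with $\{X'>0\}$ would, by continuity, yield such an interval), whereas you carry out the countable exhaustion directly, but the substance is identical.
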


\begin{proof}
Without loss of generality, suppose that
\[
  \lambda \Big(S_{\infty} \bigcap \Big\{t \in [a,b]\;:\;X'(t)>0
  \Big\}\Big)>0.
\]
By continuity, we can find times $(t_1,t_2)\subset(a,b)$ and
$\sigma>0$ such that
\[
  \lambda\big( S_{\infty} \bigcap (t_1,t_2)\big)>0 
  \com{and} X'(t)>\sigma,
\]
for all $t \in (t_1,t_2)$, contradicting \cref{lem:posspeedcurve}.
\end{proof}

\subsubsection{Nonphysical Solutions}

Despite the regularity shown above, our definition is not yet
restrictive enough due to a large number of extraneous solutions which
satisfy our definition but are clearly inadmissible for physical
reasons: here we present an explicit example.

Given constants $u_-$, $u_+$, $v_0>0$ and $w_0\ge 0$, set
\begin{equation}
\label{nonphys}
  u(t,x) =
  \begin{cases}
    u_-, & x<0\\
    u_+, & x>0
  \end{cases},
\qquad
  V(t) = \iota(v_0) + (w_0 + [u]\,t)\,\delta_0,
\end{equation}
where as usual $[u] = u_+ - u_-$; it is clear that $\wh P(V) = p(v_0)$
and that the equation \cref{Veq} is satisfied in $X^* =
M_{loc}(\RR)^2$,  for $t<-w_0/[u]$ if $[u]<0$, or for all $t$
otherwise.  This solution represents a varying vacuum located at
$x=0$, adjacent on both sides to constant states with finite specific
volume $v_0$.  This is nonphysical because there is no
rarefaction between $v_0<\infty$, at which the pressure is positive,
and the vacuum, at which $p=0$, while there is no shock because
$p=P(v_0)$ on either side of $x=0$.  The physical solution is the
entropy solution of the vacuum Riemann problem, described below, or
the usual Riemann problem if $w_0=0$.

\subsection{Natural Vacuum Solution}

In view of the nonphysical examples \cref{nonphys}, it is clear that
our definition is not yet restrictive enough.  Moreover, entropy
considerations play no part here, because $[u]$ can be arbitrary.
Thus we need to rule out non-physical solutions without resorting to
the entropy condition.

The key observation here is that in \cref{nonphys}, we have allowed
vacuums, corresponding to stationary singular measures in the specific
volume, to occur while the projected specific volume $v = \Pi(V)\in
L^1_{loc}(\Omega)$ remains bounded, so that the gas does not rarefy
near the vacuum.  This is clearly unphysical and should be ruled out,
so we require that $v\to\infty$ as vacuum is approached.  We call this
property \emph{consistency of the medium}; it can also be interpreted
as a boundary condition induced by the vacuum.

We thus define the set of \emph{positive consistent measures},
\begin{equation}
  \label{cons}
    M_\infty = \big\{ \mu \in {M_{loc}}_+ \;:\;
     x \in \textrm{supp}(\mu_s) \implies
     \esslim_{y\to x}\Pi(\mu_c(y)) = \infty \big\},
\end{equation}
where we have again written $\mu = \mu_c + \mu_s$ using the Lebesgue
decomposition, and we set
\[
   Y_{loc} = M_\infty \times BV_{loc}\subset M^2_{loc} = X^*_{loc}.
\]

\begin{definition}
\label{def:NVT}
The pair $(V,u) \in W^{1,q}_{w*}(0,T;Y_{loc},X^*_{loc})$ is a \emph{natural vacuum
  solution} of the $p$-system \cref{psys}, if it satisfies \cref{Veq},
namely
\[
\begin{aligned}
  V' - \DD_x u &= 0,\\
  u' + \DD_x\wh P(V) &= 0.
\end{aligned}
  \com{in} L^q_{w*}(0,T;X^*_{loc}),
\]
A natural vacuum solution solves the Cauchy problem with Cauchy data
$(V_0,u_0)$ if the time-continuous representative $(\ol V,\ol u)$
satisfies
\[
  \big(\ol V(0),\ol u(0)\big) = \big(V_0,u_0\big) \com{in} X^*_{loc}.
\]
\end{definition}

\subsection{Rankine-Hugoniot Conditions}

Since $X^* = M_{loc}^2$ and \cref{Veq} is satisfied in $X^*$, the
distributional derivatives $\DD_xu$ and $\DD_xp$ are both measures,
which in turn implies that $u(t)$ and $p(t)$ are $BV$ functions of
$x\in\Omega$ for a.e.~$t$.  Thus, for a.e.~$t$, both $u(t)$ and $p(t)$
have well-defined left and right limits for each $x$ with countably
many jumps.  In order to obtain appropriate jump conditions, we assume
that the solution has a single isolated discontinuity located at
$x=X(t)$. 

Specifically, suppose there is an open set $\mc U = \{(t,x): t \in
(a,b),\ x\in (c,d)\}$ and that $X(t) \in (c,d)$ for all $t \in [a,b]$,
so that $\mathcal{C}= \{(t,x) : x=X(t),\ t\in(a,b)\}\subset \mc U$.
We assume that $u$ and $V$ are $C^1$ functions of $(t,x)$ on the open
region $\mc U\backslash \mathcal{C}$.  In particular, $v = \Pi(V)$ is
finite and $p>0$ at any point of $\mc U\backslash\mathcal{C}$.

According to these assumptions, and since the discontinuity is
isolated, it follows that for $(t,x)\in\mc U$ , we can write
\begin{equation}
\begin{aligned}
  u(t,x) &= u_L(t,x)\,H\big(X(t)-x\big)
     + u_R(t,x)\,H\big(x-X(t)\big) \com{and}\\
  p(t,x) &= p_L(t,x)\,H\big(X(t)-x\big)
     + p_R(t,x)\,H\big(x-X(t)\big),
\end{aligned}
\label{up}
\end{equation}
where each of $u_L$, $u_R$, $p_L$ and $p_R$ are in $C^1(\mc U)$ and
$H$ is the Heaviside function.  We denote the jump in a quantity $g$
by
\[
  [g](t) := g_R\big(t,{X(t)+}\big)-g_L\big(t,{X(t)-}\big),
\]
so that both $[u]$ and $[p]$ are differentiable functions of $t$.
Since $V(t)$ is a Radon measure, which may contain a Dirac mass, we
assume it has the form
\[
  V(t) = \iota\Big(v_L(t,\cdot)\,H\big(X(t) - \cdot)\big)\Big)
   + \iota\Big(v_R(t,\cdot)\,H\big(\cdot - X(t))\big)\Big)
   + w(t)\,\delta_{X(t)},
\]
where $v_L(t,\cdot)$, $v_R(t,\cdot)\in L^1(\mc U)$ and $w$ are
differentiable functions of $t$, consistent with $V\in
W^{1,1}_{w*}(0,T;L^1_{loc},M_{loc})$, as in \cref{lem:dirac} and
\cref{cor:singsetprop}.  Note that $v_L$ and $v_R$ are generally
unbounded as $x\to X(t)$ so that $[v]$ is not necessarily defined.
However, since $p_L, \ p_R \in C^1(\mc U)$ and $p=P(v)=\wh P(V)$, left
and right limits of $v(t,\cdot)$ exist everywhere, although these may
be infinite on the curve $\mc C$.

With these assumptions, we now calculate the appropriate
derivatives and plug them in to \cref{Veq}.  Using the distributional
derivative, we have
\[
\begin{aligned}
  \DD_xp &= \iota\Big(\del_xp_L\,H\big(X(t)-\cdot\big)\Big)
     + \iota\Big(\del_xp_R\,H\big(\cdot-X(t)\big)\Big)
     + [p]\,\delta_{X(t)}, \com{and} \\
  \DD_xu &= \iota\Big(\del_xu_L\,H\big(X(t)-\cdot\big)\Big)
     + \iota\Big(\del_xu_R\,H\big(\cdot-X(t)\big)\Big)
     + [u]\,\delta_{X(t)},
\end{aligned}
\]
where these are to be interpreted as measures.  Next, we calculate
\[
  u' = \iota\Big(\del_tu_L\,H\big(X(t)-\cdot\big)\Big)
     + \iota\Big(\del_tu_R\,H\big(\cdot-X(t)\big)\Big)
     + [u](t)\,\big(-X'(t)\big)\,\delta_{X(t)},
\]
where, since the distributional $t$-derivative is a measure, it
coincides with the B-weak and G-weak derivatives.  Equating the
coefficients of the Dirac masses in $\cref{Veq}_2$
then yields the first Rankine-Hugoniot condition,
\begin{equation}
  \label{RH1}
  X'(t)\,[u] = [p],
\end{equation}
while away from the curve $x=X(t)$, the equation $\del_tu+\del_xp=0$
holds in the classical sense.

Our assumptions combined with \cref{cor:singsetprop} imply that the set
\[
\begin{aligned}
    S_{\infty} &= \big\{t \in (a,b)\;:\; p_L(t,X(t)-)=0 \com{or}
                                       p_R(t,X(t)+)=0 \big\}\\
  &= \big\{t \in (a,b)\;:\; v_L(t,X(t)-)=\infty \com{or}
                               v_R(t,X(t)+)=\infty \big\}
\end{aligned}
\]
satisfies
\begin{equation}\label{Sinf}
  \lambda\big( S_{\infty} \bigcap \big\{t\;:\;  X'(t) \neq 0 \big\}\big)=0.
\end{equation}
Thus if $X'(t)\ne0$, then the limits $v_L\big(t,{X(t)-}\big)$ and
$v_R\big(t,{X(t)+}\big)$ must be finite, and in this case the jump
$[v]$ makes sense.

Differentiating the measure $V(t)$, and using \cref{Sinf}, we get, for
almost every $t\in S_{\infty}$,
\begin{equation}
\label{Vpr1}
  V'(t) = \iota\Big(\del_tv_L\,H\big(X(t)-\cdot\big)\Big)
     + \iota\Big(\del_t v_R\,H\big(\cdot-X(t)\big)\Big)\\
     + w'(t)\,\delta_{X(t)},
\end{equation}
while for almost every $t\in (a,b) \backslash{S}_{\infty}$,
we have
\begin{equation}
\begin{aligned}
  V'(t) &= \iota\big(\del_tv_L\,H\big(X(t)-\cdot\big)\big)
     + \iota\big(\del_t v_R\,H\big(\cdot-X(t)\big)\big)\\
&\quad +v_L\big(t,{X(t)-}\big)\,X'(t)\,\delta_{X(t)}
     - v_R\big(t,{X(t)+}\big)\,X'(t)\,\delta_{X(t)}\,\\
    &\quad  + w'(t)\,\delta_{X(t)}
     + w(t)\,X'(t)\,\DD_x\delta_{X(t)}\,.
 \end{aligned}
\label{Vpr}
\end{equation}

It follows that $V'$ in general is a distribution, defined by its
action on test functions.  In a weak* solution, $V'(t)\in M_{loc}$
must be a bounded measure, so that the coefficient of the last term of
\cref{Vpr} necessarily vanishes,
\begin{equation}
  \label{wX0}
  w(t)\,X'(t) = 0,
\end{equation}
consistent with \cref{lem:dirac} above.  In addition, all coefficients
of $\delta_{X(t)}$ in \cref{Vpr} must necessarily be bounded, so we can
combine \cref{Vpr1} and \cref{Vpr} into
\[
  V'(t) = \iota\Big(\del_tv_L\,H\big(X(t)-\cdot\big)\Big)
     + \iota\Big(\del_tv_R\,H\big(\cdot-X(t)\big)\Big)
     + \big(w'(t) - [v]\,X'(t)\big)\,\delta_{X(t)},
\]
where we have used the convention that
\[
  \big([v] \, X' \big) (t):=0 \com{for all} t \in S_{\infty}.
\]
Equating the measures in $\cref{Veq}_1$ yields the
Rankine-Hugoniot condition
\begin{equation}
  \label{RH2}
  [u] = w'(t) - X'(t)\,[v],
\end{equation}
while away from the curve $x=X(t)$, the equation $\del_tv-\del_xu=0$
again holds in the classical sense.

We can regard the three conditions \cref{RH1}, \cref{RH2} and \cref{wX0} as
generalized Rankine-Hugoniot conditions suitable for vacuum solutions,
which naturally extend the usual conditions, as follows.
First suppose that $w(t)>0$, which corresponds to the presence of a
vacuum of spatial width $w(t)$.  By continuity, this condition
persists in an interval $(t-\e,t+\e)$.  Also, by \cref{wX0}, we have
$X'(t) = 0$, so $X(t) =: X_0$ is constant in this interval,
consistent with \cref{lem:dirac}, and the natural vacuum condition
gives $v(t,X(t)\pm)=\infty$, so that
\begin{equation}
  w'(t) = [u], \com{while also}
  p(t,X_0\pm) = 0,
\label{vacRH}
\end{equation}
so that the pressure vanishes at vacuum as expected, while the spatial
expansion rate of the vacuum is the jump in velocity.
Now suppose that $X'(t)\ne 0$, so the discontinuity is not
stationary.  Again by continuity this holds in $(t-\e,t+\e)$, and
we conclude from \cref{wX0} that $w(t)=w'(t)=0$.  By our earlier
remark, both $v_L$ and $v_R$ remain finite, and \cref{RH1}, \cref{RH2}
reduce to the usual Rankine-Hugoniot conditions,
\begin{equation}
  X'(t)\,[-v] = [u], \qquad X'(t)\,[u] = [p].
\label{RH}
\end{equation}

\subsection{Entropy Condition}

For smooth solutions, it is easy to derive an energy equation:
multiplying the first equation of \cref{psys} by $-p$, the second by
$u$, and adding gives the scalar equation
\[
  \del_t\big(\TS{\frac12}u^2\big) - p\,\del_tv + \del_x(u\,p) = 0.
\]
Thus, using the specific internal energy, which satisfies
\[
  \eps = E(v) := \int_v^\infty P(\ol v)\;d\ol v,
\]
we obtain the conservation of energy,
\[
  \del_t\big(\TS{\frac12}u^2 + \eps\big) + \del_x(u\,p) = 0,
\]
which in turn provides an entropy/flux pair for solutions with shocks.

As in \cref{Pfunc}, the internal energy $E$ can be regarded as a map of
functions,
\[
  E:\mc A\subset L^1(\Omega)\to L^1(\Omega)  \com{via}
  \eps = E\circ v:\Omega\to\RR,
\]
and we again extend this to the positive measures $M_{loc}(\Omega)_+$ by
\[
  \wh E:{M_{loc}(\Omega)}_+ \to L^1_{loc}(\Omega), \com{by}
  \wh E(\mu) = E(\Pi(\mu)).
\]

We now impose the entropy condition for a natural vacuum solution
$(V,u)$, namely, we require that the map
\[
  t \mapsto \big(\TS{\frac12}u^2 + \wh E(V)\big)
    \in W^{1,1}_{w*}(0,T;M_{loc},M_{loc}),
\]
and that the entropy production be non-positive,
\begin{equation}
  \big(\TS{\frac12}u^2 + \wh E(V)\big)'
     + \DD_x\big(u\,\wh P(V)\big) \le 0
  \com{in} L^1_{w*}(0,T;M_{loc}),
\label{ent}
\end{equation}
both terms being interpeted as a measure.

In regions where the natural vacuum solution is differentiable, the
entropy inequality is satisfied as an equality.  On the other hand, if
the solution is discontinuous on an isolated curve $X(t)$, we again
describe the solution using \cref{up}.  Calculating the derivatives of
the measures as in the previous section, the measure in \cref{ent}
becomes
\[
  -X'(t)\,\big([\TS{\frac12}u^2] + [\eps]\big)\,\delta_{X(t)}
   + [u\,p]\,\delta_{X(t)},
\]
where the absolutely continuous part cancels because the solution is
classical where it is differentiable, and our entropy condition thus
becomes
\begin{equation}
  -X'(t)\,\big([\TS{\frac12}u^2] + [\eps]\big) + [u\,p] \le 0.
\label{entineq}
\end{equation}
Again there are two possibilities: first, if $X'(t) = 0$, then
\cref{vacRH}, \cref{RH} imply that $[u\,p]=0$ and the entropy condition is
satisfied as an equality, reflecting the fact that there is no shock.

On the other hand, if $X'(t)\ne0$, we use the identity
\[
  [g_1\,g_2] = \ol g_1\,[g_2] + [g_1]\,\ol g_2,
  \com{with} \ol g := \frac{g_R + g_L}2,
\]
together with \cref{RH}, to write
\[
\begin{aligned}
  -X'(t)\,\big([\TS{\frac12}u^2] + [\eps]\big) + [u\,p]
  &= -X'(t)\,\big(\ol u\,[u] + [\eps]\big) + \ol u\,[p] + [u]\,\ol p\\
  &= -X'(t)\,\big([\eps] + [v]\,\ol p\big).
\end{aligned}
\]
Now note that
\[
  [\eps] = E(v_R) - E(v_L) = \int_{v_R}^{v_L}P(\ol v)\;d\ol v, \com{and}
  [v]\,\ol p =
   - \int_{v_R}^{v_L}\TS{\frac12}\big(P(v_L)+P(v_R)\big)\;d\ol v,
\]
and so if $P(v)$ is convex, as is usually the case, then the entropy
inequality holds provided $X'(t)\,(v_R-v_L) > 0$.  This in turn
expressed the well-known fact that the pressure is greater behind the
shock, and reduces to Lax's shock condition.

\section{Examples of Natural Vacuum Solutions}
\label{sec:exs}

By way of example we present some explicit examples of natural vacuum
solutions, which are not weak solutions because of the presence of
vacuums, but which clearly extend the class of $BV$ weak solutions.
Before writing down the examples we introduce a convenient variable
and describe the elementary waves of the system.

\subsection{Symmetric Variables}

As in \cite{}, it is convenient to describe the solutions using a
nonlinear change of thermodynamic variable, which in turn simplifies
the description of waves.  Recall that the usual costitutive relation
is given by a pointwise function $P:(0,\infty)\to(0,\infty)$
expressing the pressure in terms of specific volume, $p=P(v)$, with
the properties
\begin{equation}
  P'(v) < 0, \quad
  \lim_{v\to\infty}P(v) = 0, \com{and}
  \int_1^\infty \sqrt{-P'(v)}\;dv < \infty.
\label{Pass}
\end{equation}
These conditions express hyperbolicity of the system, pressureless
vacuum, and possibility of vacuum formation, respectively.
Hyperbolicity is the condition that allows for forward and backward
nonlinear waves which propagate with (absolute) Langrangian sound
speed $C(v) := \sqrt{-P'(v)}$.  It is clear that each of these
properties is satisfied for an ideal gas, which has constitutive
function $P(v) = A\,v^{-\gamma}$, for $\gamma>1$.

We define the auxiliary function
\[
  H:(0,\infty)\to(0,\infty) \com{by}
  H(v) := \int_v^{\infty} C(\ol v) \; d\ol v,
\]
and introduce the symmetric variable $h$, which defines $v = v(h)$ by
\[
  v = v(h) := H^{-1}(h), \quad
  p = p(h) := P(H^{-1}(h)), \com{and}
  c = c(h) := C(H^{-1}(h)).
\]
Our assumptions \cref{Pass} imply that $H$ is monotone decreasing and
thus invertible, and that $H$ vanishes as $v\to\infty$, so the vacuum
is characterized as $h=0$, a bounded state.  In addition, we have
$p(0)=0$ and $c(0)=0$, and we calculate
\[
\begin{aligned}
  \frac{dv(h)}{dh} &= \Big(\frac{dH}{dv}\Big)^{-1} = \frac{-1}{C(v)}
   = \frac{-1}{c(h)},  \com{and}\\[2pt]
  \frac{dp(h)}{dh} &= \frac{dP}{dv}\,\Big(\frac{dH}{dv}\Big)^{-1}
   = \frac{-C^2(v)}{-C(v)} = c(h).
\end{aligned}
\]

Using the symmetric variable $h$, we rewrite the $p$-system \cref{psys}
as
\[
  \del_t v(h) - \del_x u = 0, \qquad
  \del_t u + \del_x p(h) = 0,
\]
and we now regard the unknowns as $(h,u)$.  When the solution is
differentiable, we can write it in the symmetric quasilinear form
\[
  \del_t h + c(h)\,\del_x u = 0, \qquad
  \del_t u + c(h)\,\del_x h = 0,
\]
and it is clear that the Riemann invariants are $u\pm h$, so we can write
the diagonal form
\begin{equation}
  \del_t (u+h) + c(h)\,\del_x (u+h) = 0, \qquad
  \del_t (u-h) - c(h)\,\del_x (u-h) = 0.
\label{RI}
\end{equation}

Finally, recalling that the specific internal energy is given by
\[
  E(v) = \int_v^\infty P(\ol v)\;d\ol v,
\]
we set $\eps(h) = E(H^{-1}(h))$ and calculate
\[
  \frac{d\eps(h)}{dh} = \frac{dE}{dv}\,\Big(\frac{dH}{dv}\Big)^{-1}
   = \frac{-P(v)}{-C(v)}
   = \frac{p(h)}{c(h)},  \com{so}
  \eps(h) = \int_0^h \frac{p(\ol h)}{c(\ol h)}\;d\ol h,
\]
and again for differentiable solutions we get the entropy equation
\[
  \del_t \big( \TS{\frac12}u^2 + \eps(h)\big)
   + \del_x \big( u\,p(h)\big) = 0,
\]
which yields the usual entropy inequality for shocks.

By way of example, it is a straight-forward calculation to describe a
$\gamma$-law gas, for which $P(v) = A\,v^{-\gamma}$, fully in terms of
symmetric variables: up to rescaling by a constant, we have
\begin{equation}
  c(h) = h^\beta, \quad
  v(h) = \frac{h^{1-\beta}}{\beta-1}, \quad
  p(h) = \frac{h^{1+\beta}}{\beta+1}, \com{and}
  \eps(h) = \frac{h^2}{2(\beta+1)},
\label{ideal}
\end{equation}
where the constant $\beta := \frac{\gamma+1}{\gamma-1}>1$.

\subsection{Elementary Waves}

There are two types of elementary waves, namely shocks and simple
waves.  Shocks satisfy the Rankine-Hugoniot conditions
\[
  X'(t)\,[-v] = [u], \qquad X'(t)[u] = [p],
\]
which yields
\[
  X'(t) = \pm \sigma := \pm\sqrt{[p]/[-v]}, \qquad
  [u] = \pm \sigma\,[-v],
\]
where $\sigma>0$ is the absolute shock speed.  For definiteness, we
assume that the pressure $p=P(v)$ is convex, so the entropy condition
implies that the pressure is greater behind the shock.  Thus for a
backward shock, $[-v] = v_L-v_R = \big|[-v]\big|>0$, while for a
forward shock $[-v] = -\big|[-v]\big|<0$.  In either case, the states
are related by
\begin{equation}
\begin{gathered} [u] = u_R - u_L = - \sigma\,\big|[-v]\big| = -
\sqrt{[p]\,[-v]}, \\ \sigma = \sqrt{[p]/[-v]}, \com{and} X'(t) =
\pm\sigma.
\end{gathered}
\label{shock}
\end{equation}

A simple wave is a $C^1$ solution of the quasilinear system with
one-dimensional image, so we can take $u = u(h)$, say.  Plugging this
into the Riemann invariant equations \cref{RI}, we get
\[
  \big(u'(h)+1\big)\,\Big(\del_th + c(h)\,\del_xh\Big) = 0  \com{and}
  \big(u'(h)-1\big)\,\Big(\del_th - c(h)\,\del_xh\Big) = 0.
\]
Thus, along the forward and backward characteristics
\[
  \frac{dx}{dt} = \pm c(h(t,x)), \com{we have}
  \frac{dh}{dt} = 0 \com{and}  u'(h) = \pm 1,
\]
respectively.  In particular the characteristics are straight lines,
$x - x_0 = \pm c(h)\,\big(t-t_0\big)$, on which we have
$u(t,x) = \pm h(t,x) + K$.  That is, we describe the simple wave by
\begin{equation}
  \label{simple}
  h(t,x) = c^{-1}\Big(\pm\frac{x-x_0}{t-t_0}\Big), \quad
  u(t,x) \mp h(t,x) = u_* \mp h_*,
\end{equation}
where $(h_*,u_*)$ is a reference state, typically adjacent to the
wave.  Here $(t_0,x_0)$ is a reference point for the individual
characteristic, which will generally depend on the value of $h$; if
the point $(t_0,x_0)$ is fixed, it is the \emph{center} of the wave.
The wave is \emph{compressive} or \emph{rarefactive} if the absolute
wavespeed $c$ decreases or increases from behind the wave to ahead,
respectively.  In particular, a centered compression focusses in
future time, and a centered rarefaction focusses in past time.  We
note that a simple wave may appear adjacent to the vacuum, if $h\to0$
across the wave, and the corresponding characteristics approach the
boundary of the vacuum, characterized by $x=X_0$ constant.

\subsection{Collapse of a Vacuum}

Our first example shows the collapse of a vacuum state.  We consider a
compressive vacuum with adjacent forward and backward compressions,
all of which are centered so that they focus at the origin.  There are
thus no shocks for $t\le0$, and for positive times the solution is
resolved by solving a Riemann problem.  The setup is graphically
illustrated in \cref{fig:collapse}, in which two characteristic
pictures are shown: on the left we show the Lagrangian material frame
in which we work, and on the right the Eulerian spatial frame.

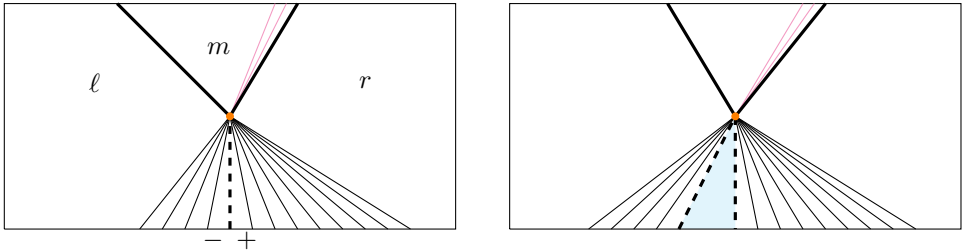
\begin{figure}[htb]
\begin{center}
\begin{tikzpicture}[scale=1.5]
  \draw[thin] (-2,-1) -- (2,-1) -- (2,1) -- (-2,1) -- (-2,-1);
  \draw[magenta!50] (0.4,1) -- (0,0) -- (0.5,1);
  \draw[very thick,dashed] (0,-1) -- (0,0);
  \draw[very thick] (-1,1) -- (0,0) -- (0.6,1);
  \foreach \cnt in {1,...,4}
     { \draw (-\cnt*0.2,-1) -- (0,0) -- (\cnt*0.2,-1); }
  \foreach \cnt in {5,...,8}
     { \draw (0,0) -- (\cnt*0.2,-1); }
  \fill[orange] (0,0) circle (1pt);
  \node at (-1.2,0.3) {$\ell$};
  \node at (1.2,0.3) {$r$};
  \node at (-0.1,0.6) {$m$};
  \node at (-0.15,-1.1) {$-$};
  \node at (0.15,-1.1) {$+$};
\end{tikzpicture}
\qquad
\begin{tikzpicture}[scale=1.5]
  \fill[cyan!10] (-0.5,-1) -- (0,0) -- (0,-1);
  \draw[thin] (-2,-1) -- (2,-1) -- (2,1) -- (-2,1) -- (-2,-1);
  \draw[magenta!50] (0.6,1) -- (0,0) -- (0.7,1);
  \draw[very thick,dashed] (-0.5,-1) -- (0,0) -- (0,-1);
  \draw[very thick] (-0.6,1) -- (0,0) -- (0.8,1);
  \foreach \cnt in {1,...,4}
     { \draw (-0.5-\cnt*0.2,-1) -- (0,0) -- (\cnt*0.2,-1); }
  \foreach \cnt in {5,...,8}
     { \draw (0,0) -- (\cnt*0.2,-1); }
  \fill[orange] (0,0) circle (1pt);
  \node at (0.15,-1.1) {\vphantom{$+$}};
\end{tikzpicture}
\caption{Centered collapse of a vacuum: Lagrangian and Eulerian
frames}
\label{fig:collapse}
\end{center}
\end{figure}

Referring to \cref{fig:collapse}, we choose states subscripted by
$\ell$, $m$ and $r$, together with velocities $u_-$ and $u_+$ adjacent
to the vacuum.  We can specify four data, say $u_-$, $u_+$, $h_\ell$
and $h_r$, and we require that $\Delta u:=u_+-u_-<0$, which ensures
the vacuum collapses.  Also, without loss of generality, we assume
$h_r\ge h_\ell$.  The remaining states are then deduced by \cref{shock}
or \cref{simple}, joining the various states by the corresponding
elementary wave.  Thus we have
\[
  u_\ell = u_- + h_\ell,  \com{and}  u_r = u_+ - h_r,
\]
and the state $(h_m,u_m)$ is found be resolving the Riemann problem,
see~\cite{}.  Since $u_r - u_\ell < 0$, there are two cases: either
$h_m>h_r$ (two shocks out) or $h_r\ge h_m>h_\ell$ (one shock out).

We use \cref{shock}, \cref{simple} to write down the solution explicitly:
for $t<0$, we have
\[
 h(t,x) =
  \begin{cases} h_\ell, &\\ c^{-1}(\frac xt), &\\ c^{-1}(\frac x{-t}),
&\\ h_r, &
  \end{cases} \quad u(t,x) =
  \begin{cases} u_\ell, &\quad x \le c(h_\ell)\,t\\ u_\ell - h_\ell +
c^{-1}(\frac xt), &\quad c(h_\ell)\,t\le x<0\\ u_r + h_r -
c^{-1}(\frac x{-t}), &\quad 0<x\le -c(h_r)\,t\\ u_r, &\quad
-c(h_r)\,t\le x
  \end{cases},
\]
and these in turn determine
\[
  p(t,x) = p(h(t,x)), \quad v(t,x) = v(h(t,x)), \com{and}
  V(t) = \iota(v(\cdot,t)) + \Delta u\,t\,\delta_0.
\]

For the outgoing waves, there are two cases: first, if $h_m > h_r$,
there are two outgoing shocks, so
\[ h(t,x) =
  \begin{cases} h_\ell, &\\ h_m, &\\ h_r, &
  \end{cases} \quad u(t,x) =
  \begin{cases} u_\ell, &\quad x < -\sigma_\ell\,t\\ u_m, &\quad
-\sigma_\ell\,t < x < \sigma_r\,t\\ u_r, &\quad \sigma_r\,t < x
  \end{cases},
\]
and
\[
  p(t,x) = p(h(t,x)), \quad v(t,x) = v(h(t,x)), \com{and}
  V(t) = \iota(v(\cdot,t)),
\]
where $\sigma_\cdot$ is the (absolute) shock speed, given by
$\sigma = \sqrt{[p]/[-v]}$,

On the other hand, if $h_m \le h_r$, then the right outgoing wave is a
rarefaction, and
\[ h(t,x) =
  \begin{cases} h_\ell, &\\ h_m, &\\ c^{-1}(\frac x{t}), &\\ h_r, &
  \end{cases} \quad u(t,x) =
  \begin{cases} u_\ell, &\quad x < -\sigma_\ell\,t\\ u_m, &\quad
-\sigma_\ell\,t < x \le c(h_m)\,t\\ c^{-1}(\frac x{t}) - h_m, &\quad
c(h_m)\,t\le x\le c(h_r)\,t\\ u_r, &\quad c(h_r)\,t\le x
  \end{cases},
\] and $p$, $v$ and $V$ given as above.

Because $h$ is monotone across each wave, it is clear that each of
$h$, $c$, $p$ and $u$ has bounded variation as a function of $x$.
Also, the abstract argument shows that because the characteristic
and/or shock conditions hold everywhere, we have a weak* solution.  It
remains to check that $V$ is a well-behaved measure, $V \in
W^{1,\infty}(0,T;M_\infty)$.

To this end, we note that, if $t<0$ and $\Omega = [-a,b]$, say,
\[
  V = \iota(v) + \Delta u\,t\,\delta_0, \com{and}
  \|V\|_{M(\Omega)} = |V|\big([-a,b]\big),
\]
and since $v\to\infty$ as $x\to 0$, $V\in M_\infty$ as long as it is bounded
as a measure.  Since $h\ge 0$ and $\Delta u<0$, for $t<0$ we calculate
\[
\begin{aligned}
  \|V(t)\|_{M(\Omega)} &= \Delta u\,t +
      \int_{-a}^bv\big(h(t,x)\big)\;dx \\
   &= \Delta u\,t + v(h_r)\,\big(b-(-c(h_r)\,t)\big)
      + v(h_\ell)\,\big(c(h_\ell)\,t-(-a)\big)\\
   &\qquad\qquad{} +
      \int_{c(h_\ell)\,t}^0 v\big(c^{-1}(\TS{\frac{x}{t}})\big)\;dx
      + \int_0^{-c(h_r)\,t} v\big(c^{-1}(\TS{\frac{x}{-t}})\big)\;dx\\
   &= \Delta u\,t + b\,v(h_r) + a\,v(h_\ell)
        + t\,\big(v(h_\ell)\,c(h_\ell) + v(h_r)\,c(h_r)\big) \\
    &\qquad\qquad{} + t \int^0_{c(h_\ell)}v\big(c^{-1}(y)\big) \;dy
      + t \int^0_{c(h_r)} v\big(c^{-1}(y)\big)\;dy \\
   &= b\,v(h_r) + a\,v(h_\ell) + t\,\big(\Delta u - h_\ell - h_r\big),
\end{aligned}
\]
where we have used
\begin{equation}
  \label{vint}
\begin{aligned}
  v(h)\,c(h) + \int^0_{c(h)}v\big(c^{-1}(y)\big) \;dy
    &= v(h)\,c(h) + \int_h^\infty v(z)\,c'(z)\;dz\\
    &= - \int_h^\infty c(z)\,v'(z)\;dz = -h,
\end{aligned}
\end{equation}
having integrated by parts, and used $v(h)\,c(h)\to 0$ as $h\to 0$.

Similarly, for $t\ge 0$, if $h_m>h_r$, so two shocks emerge,
\[
\begin{aligned}
  \|V(t)\|_{M(\Omega)} &= \int_{-a}^bv\big(h(t,x)\big)\;dx \\
  &= v(h_\ell)\,\big(a-\sigma_\ell\,t\big)
   + v(h_m)\,\big(\sigma_r\,t+\sigma_\ell\,t\big)
   + v(h_r)\,\big(b-\sigma_r\,t\big)\\
  &= b\,v(h_r) + a\,v(h_\ell)
   + t\,\big(\sigma_\ell\,(v(h_m)-v(h_\ell))
        + \sigma_r\,(v(h_m)-v(h_r))\big),
\end{aligned}
\]
while for $h_r\ge h_m$, the right outgoing wave is a rarefaction and
\[
\begin{aligned}
  \|V(t)\|_{M(\Omega)} &= v(h_\ell)\,\big(a-\sigma_\ell\,t\big)
   + v(h_m)\,\big(c(h_m)\,t+\sigma_\ell\,t\big)\\
  &\qquad\qquad{} + \int_{c(h_m)\,t}^{c(h_r)\,t}
    v\big(c^{-1}(\TS{\frac{x}{t}})\big)\;dx
    + v(h_r)\,(b-c(h_r)\,t) \\
  &= b\,v(h_r) + a\,v(h_\ell)
    + t\,\Big(\sigma_\ell\,\big(v(h_m)-v(h_\ell)\big) + h_r - h_m\Big),
\end{aligned}
\]
again using \cref{vint}.
It is now clear that $\|V(t)\|_{M(\Omega)}$ is bounded, and indeed it
is Lipschitz, as expected.  Piecewise linear dependence on $t$ occurs
in this instance because all waves are centered, and scale invariance
implies rank one homogeneity.

\subsection{Centered Waves and the Vacuum}

Our next example consists of a collapsing vacuum between two centered
simple waves, these being centered at different points, with one being
a compression and the other a rarefaction wave, as illustrated in
\cref{fig:offctr}.  One can pose this as a Cauchy problem by
taking the trace of the solution at time $t=0$.  We choose the data as
in the previous case, so that the initial compression and vacuum
collapse at the same point.

For short times the solution contains the vacuum and five other waves:
first, adjacent to the vacuum are the focussing compression, and
the centered rarefaction; next, a shock and a centered rarefaction
emerge from the point of collapse of the vacuum; and finally, as the
shock interacts with the original rarefaction it changes strength and
a backwards compression is transmitted behind the shock, as drawn in
the figure.  Of course, at some later time this reflected compression
will collapse to form a shock, which will lead to the generation of
more (ever weaker) waves, in a process which continues indefinitely.

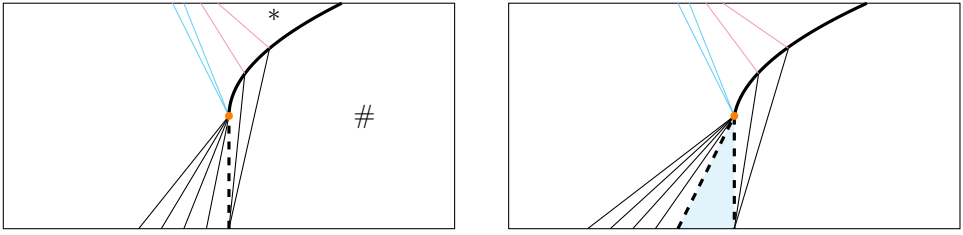
\begin{figure}[htb]
\begin{center}
\begin{tikzpicture}[scale=1.5]
  \draw[thin] (-2,-1) -- (2,-1) -- (2,1) -- (-2,1) -- (-2,-1);
  \draw[cyan!50] (-0.4,1) -- (0,0) -- (-0.5,1);
  \draw[very thick,dashed] (0,-1) -- (0,0);
  \draw[very thick,domain=0:1] plot (\x*\x,\x);
  \foreach \cnt in {1,...,4}
     { \draw (-\cnt*0.2,-1) -- (0,0); }
  \foreach \cnt in {1.25,2}
     { \draw (0,-1) -- (\cnt*\cnt*0.09,\cnt*0.3);
       \draw[magenta!50] (\cnt*\cnt*0.09,\cnt*0.3) -- (-0.5+\cnt*.2,1); }
  \fill[orange] (0,0) circle (1pt);
  \node at (1.2,0.0) {$\#$};
  \node at (0.4,0.9) {$*$};
\end{tikzpicture}
\qquad
\begin{tikzpicture}[scale=1.5]
  \fill[cyan!10] (-0.5,-1) -- (0,0) -- (0,-1);
  \draw[thin] (-2,-1) -- (2,-1) -- (2,1) -- (-2,1) -- (-2,-1);
  \draw[cyan!50] (-0.4,1) -- (0,0) -- (-0.5,1);
  \draw[very thick,dashed] (-0.5,-1) -- (0,0) -- (0,-1);
  \draw[very thick,domain=0:1] plot (\x*\x+0.175*\x,\x);
  \foreach \cnt in {1,...,4}
     { \draw (-0.5-\cnt*0.2,-1) -- (0,0); }
  \foreach \cnt in {1.25,2}
     { \draw (0,-1) -- (\cnt*\cnt*0.09+0.06*\cnt,\cnt*0.3);
       \draw[magenta!50] (\cnt*\cnt*0.09+0.06*\cnt,\cnt*0.3)
                           -- (-0.5+\cnt*.2,1); }
  \fill[orange] (0,0) circle (1pt);
\end{tikzpicture}
\caption{Vacuum Adjacent to Centered Waves: Lagrangian and Eulerian
frames}
\label{fig:offctr}
\end{center}
\end{figure}

The main issue in resolving the solution for short times is an exact
description of the states and trajectory of the shock wave; once we
know these, it is routine to describe the simple waves via
characteristics using \cref{simple}.  We briefly describe the process
for exactly resolving the shock wave before secondary interactions
occur.  For simplicity, we assume a $\gamma$-law gas, given by
\cref{ideal}.

The shock trajectory is a curve in the plane, and it is convenient to
parameterize it by the state $h$ ahead of the shock, which is also
part of the centered rarefaction in the data.  Thus the shock lies on
the curve $\big(x(h),t(h)\big)$, and using \cref{shock} for the
trajectory and \cref{simple} for the centered rarefaction, we have
\[
  \frac{dx}{dh} = \sigma(h)\,\frac{dt}{dh},  \com{while also}
  x(h) = c(h)\,t(h),
\]
where $c(h)=h^\b$ is the speed of the characteristic from the origin,
and $\sigma(h)$ is the shock speed at the point $\big(x(h),t(h)\big)$.
In particular, we need to show that the shock curve can be defined up
to the point of vacuum collapse, that is, that the limit exists as
$h\to 0+$.  Combining these relations yields the linear differential
equation
\[
  \frac{dc}{dh}\,t(h) + c(h)\,\frac{dt}{dh} =
  \sigma(h)\,\frac{dt}{dh},
\]
which we can solve to get
\[
  t = t_\#\,\textrm{exp}\Big(\int_{h_\#}^h\frac{c'(h)\;dh}{\sigma(h)-c(h)}\Big),
  \com{and} x(h) = c(h)\,t(h).
\]
It follows that provided the integral converges as $h\to 0$, we can
choose $t_\#$ so that the shock begins at the appropriate point.  Using
\cref{ideal}, we can write this integral as
\begin{equation}
  I = \int_{h_\#}^h\frac{c'(h)\;dh}{\sigma(h)-c(h)}
    = \b\,\int_{h_\#}^h\frac1{\sigma/c - 1}\;\frac{dh}h.
\label{int}
\end{equation}

We now consider the states on either side of the shock, using the
following notation: parameterizing the ahead (right) state by $h$, we
write the behind state as $h_b = z(h)\,h$, so $z(h)>1$ is defined to
be the ratio of behind state to ahead state.  Again using \cref{ideal},
we then write the jump across the state as
\[
\begin{aligned}
  [p] &= p(h_b) - p(h) = \frac{(z\,h)^{\b+1} - h^{\b+1}}{\b+1}
      = h^{\b+1}\,z^{\b+1}\,q_{\b+1}(z) \com{and}\\
  [-v] &= v(h) - v(h_b) = \frac{h^{1-\b} - (z\,h)^{1-\b}}{\b-1}
      = h^{1-\b}\,q_{\b-1}(z),
\end{aligned}
\]
where we have set
\[
  q_n(z) := \frac{1-z^{-n}}n.
\]
Using this notation, the shock relations \cref{shock} simplify to
\begin{equation}
  [u] = - h\,z^{\frac{\b+1}2}\,r(z) \com{and}
  [\sigma] = h^\b\,z^{\frac{\b+1}2}\,s(z) = c(h)\,z^{\frac{\b+1}2}\,s(z),
\label{us}
\end{equation}
where we have defined
\[
  r(z) := \sqrt{q_{\b+1}(z)\,q_{\b-1}(z)} \com{and}
  s(z) := \sqrt{q_{\b+1}(z)/q_{\b-1}(z)}.
\]

It remains to find $z(h)$, which will in turn completely determine the
shock trajectory and states.  We do this by exactly resolving the
interaction of the shock and centered interaction wave.  Referring to
\cref{fig:offctr}, we label states as follows: the reference ahead
state is $h_\#$, with corresponding behind state $h_*=z_\#\,h_\#$, and
the varying ahead state is $h$, with corresponding behind state $h_b =
z\,h$.  Across the shock, we have
\[
  u_\# - u_* = - h_\#\,z_\#^{\frac{\b+1}2}\,r(z_\#) \com{and}
  u_h - u_b = - h\,z^{\frac{\b+1}2}\,r(z),
\]
while the waves joining the other states are simple, so we use
\cref{simple} to write
\[
  u_\# - u_h = h_\# - h  \com{and}
  u_* - u_b = h_b - h_* = z\,h - z_\#\,h_\#.
\]
Eliminating $u$, we get
\[
  u_\# - u_b = h_\# - h - h\,z^{\frac{\b+1}2}\,r(z)
     = - h_\#\,z_\#^{\frac{\b+1}2}\,r(z_\#) + z\,h - z_\#\,h_\#,
\]
which simplifies to
\begin{equation}
  h\,\big(1 + z + z^{\frac{\b+1}2}\,r(z)\big)
  = h_\#\,\big(1 + z_\# + z_\#^{\frac{\b+1}2}\,r(z_\#)\big) =: A,
\label{hA}
\end{equation}
where $A$ is a reference constant.  This last relation determines
$z(h)$ implicitly, but we can work explicitly by changing variables:
it is clear that this equality is monotone in $z$, so that
$z\to\infty$ as $h\to 0$, and differentiating, we get
\[
  dh\,\big(1 + z + z^{\frac{\b+1}2}\,r(z)\big)
  + h\,\big(1 + z + z^{\frac{\b+1}2}\,r(z)\big)'\,dz = 0,
\]
so the integral in \cref{int} becomes explicit,
\[
  I = - \b\,\int_{z_\#}^z
    \frac{\big(1 + z + z^{\frac{\b+1}2}\,r(z)\big)'\;dz}
	{\big(1 + z + z^{\frac{\b+1}2}\,r(z)\big)
           \,\big(z^{\frac{\b+1}2}\,s(z)-1\big)}\,.
\]
Since the integrand is of order $z^{\frac{\b-1}2}/z^{\b+1} =
z^{-\frac{\b-3}2}$ for $z$ large, the integral converges as
$z\to\infty$ and the shock emerges from the collapse as required.
Moreover, using \cref{hA}, \cref{us} we write
\[
  h = \frac A{1 + z + z^{\frac{\b+1}2}\,r(z)} \com{and}
  \sigma = \frac{A^\b\,z^{\frac{\b+1}2}\,s(z)}
   {\big(1 + z + z^{\frac{\b+1}2}\,r(z)\big)^\b}
  = O(1)\,z^{\frac{1-\b^2}2},
\]
so we have both $\frac{dh}{dz}<0$ and $\frac{d\sigma}{dz}<0$, so
$\frac{d\sigma}{dh}>0$, and the shock trajectory is initially concave,
as drawn.

\subsection{Vacuum Riemann Problem}

For our final example, we introduce a generalization of the Riemann
problem, which allows for the presence of an embedded vacuum of finite
spatial width in the initial data.  That is, our data consists of
bounded constant left and right states $(h_\ell,u_\ell)$ and $(h_r,u_r)$,
together with an initial spatial width $w_0 \ge 0$ of a vacuum located
at $x=0$; a zero width $w_0=0$ reduces to the usual Riemann problem.
The solution of the vacuum Riemann problem then provides a building
block for the construction of general solutions which contain vacuums.

If $w_0>0$, then there is a vacuum in the solution, which must have
simple waves adjacent to it, and these must be centered at the origin,
so are rarefactions.  The left rarefaction connects $(h_\ell,u_\ell)$
to $(0,u_-)$, and the right rarefaction connects $(0,u_+)$ to
$(h_r,u_r)$, where $u_-$ and $u_+$ are the velocities at the edge of
the vacuum, and are given by
\[
  u_- = u_\ell + h_\ell \com{and} u_+ = u_r - h_r.
\]
The quantity
\[
  \Delta u := u_+ - u_- = u_r - h_r - u_\ell - h_\ell
\]
determines whether the vacuum is compressive or rarefactive: if
$\Delta u\ge 0$, the vacuum persists for all times $t\ge 0$, and the
solution is
\begin{equation}
\label{RPsol}
 h(t,x) =
  \begin{cases}
     h_\ell, &\\
     c^{-1}(\frac{-x}t), &\\
     c^{-1}(\frac xt), &\\
     h_r, &
  \end{cases} \
 u(t,x) =
  \begin{cases}
    u_\ell, &\quad x \le - c(h_\ell)\,t\\
    u_\ell + h_\ell - c^{-1}(\frac{-x}t), &\quad -c(h_\ell)\,t\le x<0\\
    u_r - h_r + c^{-1}(\frac x{t}), &\quad 0<x\le c(h_r)\,t\\
    u_r, &\quad c(h_r)\,t\le x
  \end{cases},
\end{equation}
and these in turn determine
\[
  p(t,x) = p(h(t,x)), \quad
  v(t,x) = v(h(t,x)) \com{and}
  V(t) = \iota(v(\cdot,t)) + \big(w_0 + \Delta u\,t\big)\,\delta_0.
\]
We note that the functions $u(t,x)$, $h(t,x)$ and $v=\Pi(V)$ are
self-similar, but as long as $w_0>0$, neither the solution nor data
are self-similar.

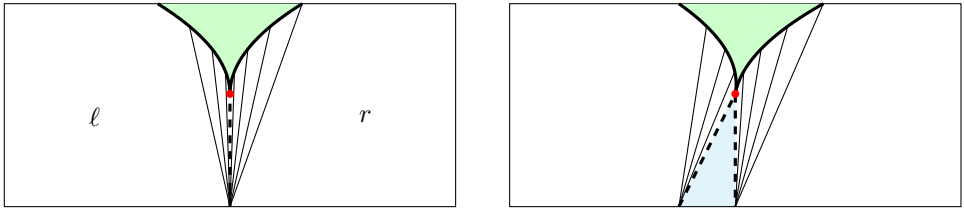
\begin{figure}[htb]
\begin{center}
\begin{tikzpicture}[scale=1.5]
  \draw[thin] (-2,-1) -- (2,-1) -- (2,0.8) -- (-2,0.8) -- (-2,-1);
  \draw[very thick,dashed] (0,-1) -- (0,0);
  \foreach \cnt in {1,2,3}
  {
    \draw (0,-1) -- (\cnt*\cnt*0.04,\cnt*0.2);
    \draw (0,-1) -- (-\cnt*\cnt*0.04,\cnt*0.2);
  }
  \draw (0,-1) -- (0.64,0.8);
  \node at (-1.2,-0.2) {$\ell$};
  \node at (1.2,-0.2) {$r$};
  \fill[green!20,domain=0:0.8] plot (-\x*\x,\x) -- (0,0.8) -- (0,0);
  \fill[green!20,domain=0:0.8] plot (\x*\x,\x) -- (0,0.8) -- (0,0);
  \draw[very thick,domain=0:0.8] plot (\x*\x,\x);
  \draw[very thick,domain=0:0.8] plot (-\x*\x,\x);
  \fill[red] (0,0) circle (1pt);
\end{tikzpicture}\qquad
\begin{tikzpicture}[scale=1.5]
  \fill[cyan!10] (-0.5,-1) -- (0,0) -- (0,-1);
  \draw[thin] (-2,-1) -- (2,-1) -- (2,0.8) -- (-2,0.8) -- (-2,-1);
  \draw[very thick,dashed] (-0.5,-1) -- (0,0) -- (0,-1);
  \fill[green!20,domain=0:0.8] plot (-\x*\x+0.175*\x,\x) -- (0.25,0.8) -- (0,0);
  \fill[green!20,domain=0:0.8] plot (\x*\x+0.175*\x,\x) -- (0.25,0.8) -- (0,0);
  \draw[very thick,domain=0:0.8] plot (\x*\x+0.175*\x,\x);
  \draw[very thick,domain=0:0.8] plot (-\x*\x+0.175*\x,\x);
  \foreach \cnt in {1,2,3}
  {
    \draw (-0.5,-1) -- (-\cnt*\cnt*0.04+0.035*\cnt,\cnt*0.2);
    \draw (0,-1) -- (\cnt*\cnt*0.04+0.035*\cnt,\cnt*0.2);
  }
  \draw (0,-1) -- (0.78,0.8);
  \fill[red] (0,0) circle (1pt);
\end{tikzpicture}
  \caption{Collapsing Vacuum Riemann Problem: Lagrangian and Eulerian
    frames}
  \label{fig:VRP}
\end{center}
\end{figure}

On the other hand, if the vacuum is compressive, so $\Delta u<0$,
\cref{RPsol} provides a solution for only finite times: indeed, the
vacuum that was initially in the data collapses at time $T=-w_0/\Delta
u$.  As the vacuum collapses, two shocks emerge from the point
$(T,0)$, and the solution evolves non-trivially in the region behind
these shocks, as shaded in \cref{fig:VRP}.  For short times after
collapse, this can be resolved as in the previous example with slight
modifications.

\section{Compressible Euler Equations}
\label{sec:gd}

With minor modifications, we can apply our methods to the full
$3\times3$ system of the compressible Euler equations in one space
dimension.  The system is obtained by using a more general
constitutive law, which satisfies the Second Law of Thermodynamics.
This introduces a second thermodynamic variable, so requires another
conservation law, which is conservation of energy.  The thermodynamic
quantities of interest are the specific volume $v$, pressure $p$,
internal energy $\eps$, specific entropy $s$ and temperature $T>0$, and
they are related by the Second Law,
\[
  T\,ds = d\eps + p\,dv,
\]
so, when using $v$ and $s$ as independent variables, we write
\[
  \eps = E(v,s), \quad
  p = - \frac{\del E}{\del v} \com{and}
  T = \frac{\del E}{\del s}.
\]
In a Lagrangian frame, the equations are
\begin{equation}
  \label{euler}
    v_t - u_x = 0, \qquad
    u_t + p_x = 0, \qquad
    (\TS{\frac12}u^2 + \eps)_t + (u\,p)_x = 0,
\end{equation}
and for smooth solutions we easily derive the entropy equation
$s_t = 0$.  The most familiar constitutive law is that of an ideal
polytropic gas,
\[
  E(v,s) = \frac A{\gamma-1}\,v^{1-\gamma}\,e^{s/c_v}, \quad
  P(v,s) = A\,v^{-\gamma}\,e^{s/c_v}, \com{with}
  \gamma > 1.
\]

Because both $\eps$ and $p$ vanish at vacuum, we can define natural
vacuum solutions for the Euler equations just as for the $p$-system.
Using \cref{cons}, we set
\[
   Y = M_\infty \times BV_{loc}\times BV_{loc}\subset M^3_{loc} = X^*,
\]
and say that a triple
\[
  (V,u,s) \in W^{1,q}_{w*}(0,T;Y,X^*),
\]
is a natural vacuum solution of the Euler equations \cref{euler}, if it
satisfies
\[
  V' - \DD_x u = 0, \qquad
  u' + \DD_x\wh P(V,s) = 0, \qquad
  \big(\TS{\frac12}u^2 + \wh E(V,s)\big)'
     + \DD_x\big(u\,\wh P(V,s)\big) = 0,
\]
in $X^*$ for almost all $t>0$.  The solution solves the Cauchy
problem, with Cauchy data $(V_0,u_0,s_0)$ if the time-continuous
representative $(\ol V,\ol u,\ol s)$ satisfies
\[
  \big(\ol V(0),\ol u(0),\ol s(0)\big)
  = \big(V_0,u_0,s_0\big) \com{in} X^*.
\]

We extend the jump conditions without difficulty; in particular, where
the solution is differentiable it satisfies the quasilinear form of
the equation, and at jumps the generalized Rankine-Hugoniot relations
hold, namely
\[
\begin{gathered}
  X'(t)\,[u] = [p], \qquad
  X'(t)\,[\TS{\frac12}u^2 + \eps] = [u\,p], \\
  [u] = w'(t) - X'(t)\,[v],  \com{and}
  w(t)\,X'(t) = 0,
\end{gathered}
\]
using the same notation as before.

As is well known, the entropy field $s$ is a linearly degenerate
contact field with vanishing characteristic speed, and across which
$[u]=[p]=0$.  Since the vacuum also propagates with zero speed, it
follows that the entropy $s$ can jump arbitrarily across a vacuum;
however, the occurrence of the vacuum is detected by any change in the
velocity $u$ across the jump: if $[u]=0$ and $v$ is finite, the jump
is a contact, while if $[u]\ne0$ and $X'(t)=0$, the jump is a vacuum
with expansion rate $w'(t) = [u]$.

The solution is an entropy solution if it satisfies
\[
  s' \le 0 \com{in} M_{loc},
\]
where we recall $s = s(t)$ is regarded as a measure, and this is the
G-weak derivative.  As in the $2\times2$ case, this measure is
supported only on shocks, and the entropy equality $s'=0$ holds in the
presence of vacuums, as long as no shocks are present.

One can write down explicit examples as above, with the addition of
entropy jumps where necessary, while noting that in a varying
solution, the interaction of a shock with any non-trivial solution
changes the trailing entropy field.

\section{Fracture in Elasticity}
\label{sec:elast}

The time-dependent displacement or \emph{motion} $y(t,x)$ of an
isentropic elastic material in one space dimension satisfies the
second-order nonlinear equation
\begin{equation}
  y_{tt} - \tau(y_x)_x = 0.
\label{2der}
\end{equation}
By introducing the \emph{strain} $u := y_x$ and \emph{velocity} $v =
y_t$, we write this as the $2\times2$ equations of elasto-dynamics,
\begin{equation}
  u_t - v_x = 0, \qquad
  v_t - \tau_x = 0,
\label{elast}
\end{equation}
which is closed by prescribing the \emph{stress} $\tau$ by a
stress-strain relation $\tau = \tau(u)$, which serves as a
constitutive function.  This system closely resembles the $p$-system,
and for smooth solutions admits an energy inequality, namely
\[
  \big(\TS{\frac12}v^2 + W(u)\big)_t - \big(v\,\tau(u)\big)_x = 0,
\com{where}
  W(u) := \int^u\tau(s)\;ds
\]
is the elastic energy.  We do not allow interpenetration of matter, so
we require $u>0$, and we assume that $\tau'(u) > 0$, which implies
hyperbolicity of the system.  In terms of energy, these are implied by
$W(0+)=\infty$ and convexity of $W(u)$, respectively.  In particular,
we assume the existence of a unique strain $u_0$ such that
\[
  u_0 > 0 \com{and}
  \tau(u_0) = W'(u_0) = 0,
\]
that is at which the stress vanishes and the energy is minimized;
without loss of generality we may also assume that the energy vanishes
there, $W(u_0)=0$.  We generally assume also that the material is
\emph{softening}, which means that $\tau''(u)<0$.

We are interested in extending the notion of weak* solutions to this
system, in order to understand the onset of fractures or cavities in
the material.  This will again be represented as a Dirac mass in the
strain, or a discontinuity in the motion.  Although the model will
break down before an actual fracture occurs, a consistent picture of
the behavior indicated by the model provides insights into the process
of crack initiation.

\subsection{Weak* Solutions}

Following our development for the $p$-system, we regard the
stress-strain relation as providing a map
\[
  \tau:\mc A\subset L^1(\Omega)\to L^1(\Omega),
\]
and we wish to extend this to a map
\[
  \wh \tau : M_{loc}(\Omega)_+\to M(\Omega).
\]

Assuming for now that this extension has been defined, we again choose
the set of test functions to be $X=C_0(\Omega)^2$, and we set
\[
  \wt Y = {M_{loc}}(\Omega)_+ \times BV_{loc}(\Omega)
        \subset M^2_{loc}(\Omega) = X^*.
\]

\begin{definition}
A weak* solution of \cref{elast} is a pair
$(U,v) \in W^{1,q}_{w*}(0,T;\wt Y,X^*)$ satisfying
\begin{equation}
  U' - \DD_x v = 0,\qquad v' - \DD_x\wh \tau(U) = 0,
  \com{in} L^q_{w*}(0,T;X^*).
\label{Uveq}
\end{equation}
When solving a Cauchy problem, the Cauchy data
$(U^0,v^0)$ must be taken on in the space $X^*$ by the time-continuous
representative $(\ol U,\ol v)$, that is
\[
  \big(\ol U(0),\ol v(0)\big) = \big(U^0,v^0\big) \com{in} X^*.
\]
\end{definition}

It remains to extend the stress-strain relation to Radon measures.  We
again follow our development for the $p$-system.  Writing
$U\in{M_{loc}}_+$, we write
\[
  u := \Pi(U) \com{and} \mu := U - \iota(u), \com{so that}
  U = \iota(u) + \mu,
\]
with $u\in L^1_{loc}$ and $\mu\perp\lambda$.  Moreover, the singular
measure $\mu$ is uniquely decomposed into singular continuous and
atomic parts, that is
\[
  \mu = \mu_s + \mu_a, \com{with} \mu_a = \sum w_i\,\delta_{x_i},
\]
where $w_i>0$ and $x_i$ are distinct.  It is natural to require the
generalized stress to satisfy
\[
  \wh\tau(U) = \iota\big(\tau(u)\big)
    + \sum \wh\tau\big(w_i\,\delta_{x_i}\big) \in M,
\]
so we need only extend $\tau$ to a single Dirac mass.

Let $\phi$ denote a standard mollifier,
\[
  \phi \in C^{\infty}_c(\RR), \com{with}
  \phi(\cdot) \ge 0 \com{and}
  \int \phi(x)\;dx = 1,
\]
and set $\phi_\epsilon(x) := \phi(x/\epsilon)/\epsilon$.  Recalling
that the action of the Dirac mass $\delta_{x_0}\in M$ on a continuous
function $g\in C_0$ can be written as
\[
  \<\delta_{x_0},g\> =
  \lim_{\epsilon\to0} \int \phi_\epsilon(x-x_0)\,g(x)\;dx = g(x_0),
\]
we define the action of $\wh\tau(w_0\,\delta_{x_0})$ by
\[
  \<\wh\tau(w_0\,\delta_{x_0}),g\> =
  \lim_{\epsilon\to0} \int
  \tau\big(u_0 + w_0\,\phi_\epsilon(x-x_0)\big)\,g(x)\;dx.
\]
Here we include the offset $u_0$ so that the integrand again has
compact support.  We calculate
\[
\begin{aligned}
  \lim_{\epsilon\to0} \int&
  \tau\big(u_0 + w_0\,\phi_\epsilon(x-x_0)\big)\,g(x)\;dx\\
  &= \lim_{\epsilon\to0} \int_{\{x: \, \phi_{\epsilon}>0\}}
  \frac{\tau(u_0 + w_0\,\phi_{\epsilon})}{u_0 +
    w_0\,\phi_{\epsilon}}\,
  \big(u_0 + w_0\,\phi_\epsilon\big)\,g(x)\;dx \\
  &{}= \Big( \lim_{u\to\infty}\frac{\tau(u)}{u}\Big)\,w_0\,g(x_0)\,,
\end{aligned}
\]
because $\lambda\big(\{x: \, \phi_{\epsilon}>0\}\big) \to 0$.
 Thus we have
\begin{equation}
  \begin{gathered}
  \wh\tau(\mu_a) = \sum\wh\tau(w_i\,\delta_{x_i})
  = \sum L_\tau\,w_i\,\delta_{x_i} = L_\tau\,\mu_a,\\
  \com{where}
  L_\tau := \lim_{u\to\infty}\frac{\tau(u)}{u},
  \end{gathered}
\label{tauhat}
\end{equation}
and this completes our definition of the extension.
We note that the softening condition $\tau''(u)<0$ implies that
$L_\tau<\infty$ exists and is finite.

Similarly, in order to extend the notion of entropy solutions, we need
to extend the potential energy so that it is also defined on measures.
As above, it suffices to define the extension $\wh
W(w_0\,\delta_{x_0})$, for $w_0>0$; having done so, and writing
\[
  U = \iota(u) + \mu_a + \mu_s, \quad
  u:=\Pi(U), \quad \mu_a = \sum\wh\tau(w_i\,\delta_{x_i}),
\]
where $\mu_a$ is singular atomic and $\mu_s$ singular continuous, we
set
\[
  \wh W(U) = \iota\big(W(u)\big) + \sum\wh W(w_i\,\delta_{x_i}).
\]
Exactly as for the stress above, we extend the energy via a mollifier,
\[
\begin{aligned}
   \<\wh W(w_0\,\delta_{x_0}),g\> &=
  \lim_{\epsilon\to0} \int
   W\big(u_0 + w_0\,\phi_\epsilon(x-x_0)\big)\,g(x)\;dx \\
  &{}= \Big( \lim_{u\to\infty}\frac{ W(u)}{u}\Big)\,w_0\,g(x_0),
\end{aligned}
\]
so we can write
\begin{equation}
  \begin{gathered}
  \wh W(\mu_a) = \sum\wh W(w_i\,\delta_{x_i})
  = \sum L_W\,w_i\,\delta_{x_i} = L_W\,\mu_a,\\
  \com{provided}
  L_W := \lim_{u\to\infty}\frac{W(u)}{u} < \infty,
  \end{gathered}
\label{What}
\end{equation}
and $\wh W(\mu_a) = \infty$ if $L_W = \infty$; note that because the
energy $W$ is convex, the (possibly infinite) limit $L_W$ always
exists.

Having extended the energy, we say that a solution satisfying
\cref{Uveq} is an entropy weak* solution if the entropy production is
non-positive,
\begin{equation}
  \label{entrW}
  \big(\TS{\frac12}v^2 + \wh W(U)\big)'
     - \DD_x\big(v\,\wh \tau(U)\big) \le 0
  \com{in} L^q_{w*}(0,T;M_{loc}),
\end{equation}
both terms being interpeted as Radon measures.

\subsection{Properties of Weak* Solutions}

Having defined weak* solutions, we now examine their properties and
develop conditions that allow for a consistent model of fracture
initiation.  To begin, we examine the jump conditions as we did for
the $p$-system.  Thus we assume that we have a solution which is
differentiable off of a discontinuity curve $x=X(t)$.

Following \cref{up}, we assume that the velocity $v$ and strain $U$ have
the form
\begin{equation}
\begin{aligned}
  v(t,x) &= v_L(t,x)\,H\big(X(t)-x\big)
     + v_R(t,x)\,H\big(x-X(t)\big) \com{and}\\
  U(t) &= \iota\Big(u_L(t,\cdot)\,H\big(X(t) - \cdot)\big)\Big)
   + \iota\Big(u_R(t,\cdot)\,H\big(\cdot - X(t)\big)\Big)
   + w(t)\,\delta_{X(t)},
\end{aligned}
\label{Uv}
\end{equation}
so that, according to \cref{tauhat}, we also have
\begin{equation}
  \wh\tau\big(U(t)\big) =
   \iota\Big(\tau\big(u_L(t,\cdot)\big)\,H\big(X(t) - \cdot)\big)\Big)
   + \iota\Big(\tau\big(u_R(t,\cdot)\big)\,H\big(\cdot - X(t))\big)\Big)
   + L_\tau\,w(t)\,\delta_{X(t)}.
\label{tauU}
\end{equation}
Differentiating, we get
\[
\begin{aligned}
 \DD_xv &= \iota\Big(\del_xv_L\,H\big(X(t)-\cdot\big)\Big)
     + \iota\Big(\del_xv_R\,H\big(\cdot-X(t)\big)\Big)
     + [v]\,\delta_{X(t)} \com{and} \\
  v' &= \iota\Big(\del_tv_L\,H\big(X(t)-\cdot\big)\Big)
     + \iota\Big(\del_tv_R\,H\big(\cdot-X(t)\big)\Big)
     + [v](t)\,\big(-X'(t)\big)\,\delta_{X(t)},\\
\end{aligned}
\]
where we have set $[g](t):=g_R\big(t,X(t)+\big)-g_L\big(t,X(t)-\big)$,
and similarly
\[
\begin{aligned}
  U' &= \iota\Big(\del_tu_L\,H\big(X(t)-\cdot\big)\Big)
     + \iota\Big(\del_tu_R\,H\big(\cdot-X(t)\big)\Big)\\
   &\qquad{} + [u](t)\,\big(-X'(t)\big)\,\delta_{X(t)}
     + w'(t)\,\delta_{X(t)} + w(t)\,X'(t)\,\DD_x\delta_{X(t)},\\
 \DD_x\wh\tau(U) &= \iota\Big(\del_x\tau(u_L)\,H\big(X(t)-\cdot\big)\Big)
     + \iota\Big(\del_x\tau(u_R)\,H\big(\cdot-X(t)\big)\Big)\\
    &\qquad{} + [\tau(u)]\,\delta_{X(t)} + L_\tau\,w(t)\,\DD_x\delta_{X(t)}.
\end{aligned}
\]
It follows immediately that in order for \cref{Uveq} to make sense in
$X^*$, the coefficients of $\DD_x\delta_{X(t)}$ must vanish, so that
\begin{equation}
  \label{vanish}
  w(t)\,X'(t) = 0  \com{and}
  L_\tau\,w(t) = 0.
\end{equation}
Assuming these conditions, we then again obtain a generalized jump
condition,
\begin{equation}
  w'(t) - X'(t)\,[u] = [v], \qquad
  - X'(t)\,[v] = [\tau(u)],
\label{tauRH}
\end{equation}
while the system holds in the classical sense where the solution is
differentiable.  It follows that in order to admit any fracture
solution in $W^{1,1}_{w*}(0,T;Y,X^*)$, we must have
\[
  L_\tau = \lim_{u\to\infty}\frac{\tau(u)}{u} = 0,
\]
and if this holds then any crack must necessarily be stationary,
$X'(t) = 0$.  These conditions are consistent with gas dynamics, for
which $p(\infty)=0$.

Next, in order to avoid spurious singular measure solutions, we need
to again impose a consistency of the medium condition: this is a
modelling condition dependent on physical effects at the crack after
the crack has been initiated.  If there is a nontrivial crack, it must
be stationary and by \cref{tauRH} we also have $[\tau] = 0$.  This means
that the stress, and so also the strain, has a single limit, say
\begin{equation}
  \tau\big(u(t,x)\big) \to \tau(u_*), \com{or}
  u(t,x) \to u_*, \com{as} x \to X(t),
\label{bdry}
\end{equation}
and is analogous to the natural vacuum condition.  For example, the
simplest condition is that once the crack has been initiated, it
imposes no extra force on the interior material, so that the crack
boundary is stress-free, $u_* = u_0$.

Finally, we consider the effect of entropy: again using \cref{Uv},
\cref{tauU}, and referring to \cref{entrW}, we write the entropy as
\[
  \wh\eta = \iota\Big(\big(\TS{\frac12}v_L^2 +
           W(u_L)\big)\,H\big(X(t)-\cdot\big)\Big) +
       \iota\Big(\big(\TS{\frac12}v_R^2 +
           W(u_R)\big)\,H\big(\cdot-X(t)\big)\Big)
       +  L_W\,w(t)\,\delta_{X(t)},
\]
while, using \cref{vanish}, the entropy flux becomes
\[
  - v\,\wh\tau(U) =
  - \iota\Big(\big(v_L\,\tau(u_L)\big)\,H\big(X(t)-\cdot\big)\Big)
  - \iota\Big(\big(v_R\,\tau(u_R)\big)\,H\big(\cdot-X(t)\big)\Big).
\]
Again differentiating, and using \cref{vanish}, the entropy production
\cref{entrW} becomes
\[
\begin{aligned}
  \big(\TS{\frac12}v^2 &+ \wh W(U)\big)'
     - \DD_x\big(v\,\wh \tau(U)\big) \\&=
   \iota\Big(\big(\del_t\big(\TS{\frac12}v_L^2 +
           W(u_L)\big)-\del_x\big(v_L\,\tau(u_L)\big)\big)
           \,H\big(X(t)-\cdot\big)\Big) \\
    &\quad{}+ \iota\Big(\big(\del_t\big(\TS{\frac12}v_R^2 +
           W(u_R)\big)-\del_x\big(v_R\,\tau(u_R)\big)\big)
           \,H\big(\cdot-X(t)\big)\Big)\\
  &\quad{} + \Big(-X'(t)\big[\TS{\frac12}v^2 +  W(U)\big]
           + L_W\,w'(t) - \big[v\,\tau(u)\big]\Big)\,\delta_{X(t)},
\end{aligned}
\]
where again $[g] = g_R\big(t,{X(t)+}\big) - g_L\big(t,{X(t)-}\big)$.
The first two terms cancel because the solution is differentiable, so
the entropy condition becomes non-positivity of the coefficient, so
\begin{equation}
  -X'(t)\big[\TS{\frac12}v^2 + W(U)\big]
           + L_W\,w'(t) - \big[v\,\tau(u)\big] \le 0.
\label{nonpos}
\end{equation}
In order to make sense of this inequality, we require that
$L_W<\infty$, which in turn implies
\[
  L_W = \lim_{u\to\infty}\frac{W(u)}{u}
      = \lim_{u\to\infty}\tau(u) =: \tau_\infty < \infty,
  \com{so also} L_\tau=0.
\]
Note that \cref{nonpos} is consistent with \cref{entineq} for a gas, for
which $p_\infty = 0$.

If there is no crack, then $w(t) = 0$, and \cref{nonpos} reduces to the
usual entropy condition for shocks.  On the other hand, if there is a
crack, so $w(t)>0$, then $X'(t)=0$ and, using \cref{tauRH}, \cref{bdry},
\cref{nonpos} simplifies as
\[
\begin{aligned}
  L_W\,w'(t) - \big[v\,\tau(u)\big]
    &= L_W\,w'(t) - [v]\,\tau(u_*) \\
    &= \big(\tau_\infty - \tau(u_*)\big)\,w'(t) \le 0.
\end{aligned}
\]
It follows that $w'(t) \le 0$, so that any crack satisfying the
entropy condition must be pre-existing with $w(0)>0$.  This says that
the elasticity of the material prevents crack formation in any solution.

In summary, we have shown the following.

\begin{theorem}
The space $W^{1,q}_{w*}(0,T;X^*)$ supports weak* solutions
which admit fractures if and only if the limit $L_\tau=0$.  However,
any nontrivial fracture fails to satisfy the entropy condition
unless it has finite size in the initial data.
\label{thm:elast}
\end{theorem}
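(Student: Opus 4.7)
The plan is to assemble the jump-condition and entropy computations already derived in this section, since the theorem is essentially a corollary packaging those results; almost no new computation is needed.

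For the \emph{only if} direction of the first assertion, I would argue that the distributional derivative calculation leading to \cref{vanish} already forces the conclusion. Suppose a weak* solution is represented on some subinterval by \cref{Uv} with $w(t)>0$. Then both $U'$ and $\DD_x\wh\tau(U)$ contain $\DD_x\delta_{X(t)}$ contributions with coefficients $w(t)\,X'(t)$ and $L_\tau\,w(t)$, respectively. Since $\DD_x\delta_{X(t)}$ is not a Radon measure, and \cref{Uveq} is posed in $L^q_{w*}(0,T;X^*)$, these coefficients must vanish, so $L_\tau\,w(t)=0$ forces $L_\tau=0$. For the converse, I would exhibit an explicit example: assuming $L_\tau=0$, take constant strains $u_L=u_R=u_*$ and velocities $v_L\ne v_R$ on either side of a stationary crack at $x=X_0$ of weight $w(t)=w_0+[v]\,t$. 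The jump conditions \cref{tauRH} then reduce to $w'(t)=[v]$ and $[\tau(u)]=0$, both automatic, while \cref{vanish} holds because $X'\equiv 0$ and $L_\tau=0$. Norm-continuity of the atomic part in $M_{loc}$ follows from \cref{lem:dirac}, so the triple lies in $W^{1,q}_{w*}(0,T;\wt Y,X^*)$.

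For the entropy statement, I would quote the reduction performed just before the theorem. Assuming $L_\tau=0$ so that cracks are admissible, on any interval where $w(t)>0$ we have $X'(t)=0$ and, by the consistency condition \cref{bdry}, a common limit $\tau(u_L)=\tau(u_R)=\tau(u_*)$. The entropy inequality \cref{nonpos} then collapses to $(\tau_\infty-\tau(u_*))\,w'(t)\le 0$. Because $\tau$ is strictly increasing past $u_0$ with finite limit $\tau_\infty=L_W$, we have $\tau_\infty>\tau(u_*)$, and hence $w'(t)\le 0$. Combined with $w(t)\ge 0$ (positivity of $U$), the condition $w(0)=0$ implies $w(t)\equiv 0$; equivalently, any entropy-admissible nontrivial fracture must already be present in the initial data.

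The only step requiring care is the sufficiency construction, where I must check that the example triple genuinely belongs to $W^{1,q}_{w*}(0,T;\wt Y,X^*)$ as demanded by the definition of weak* solution. This reduces to norm-continuity of $t\mapsto w(t)\,\delta_{X_0}$ in $M_{loc}$, which follows immediately from \cref{lem:dirac} since $w$ is affine in $t$ and $X_0$ is fixed. I therefore expect this to be a routine verification rather than a genuine obstacle; the substance of the theorem lies entirely in the interpretation of the preceding analysis.
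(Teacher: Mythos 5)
Your proposal is correct and follows essentially the same route as the paper: Theorem~\ref{thm:elast} is stated there with ``In summary, we have shown the following,'' and its proof is exactly the packaging you describe of the vanishing of the $\DD_x\delta_{X(t)}$ coefficients in \cref{vanish}, the jump conditions \cref{tauRH}, and the entropy reduction \cref{nonpos} to $(\tau_\infty-\tau(u_*))\,w'(t)\le 0$. The one point where you diverge is the sufficiency direction, where you construct a simple stationary crack between constant states; the paper instead supplies this via the explicit Pericak-Spector/Giesselmann--Tzavaras solution \cref{uvcomp} in the following subsection, shown to be a weak* solution precisely when $L_\tau=0$ --- your example is simpler and equally valid.
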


\subsection{Comparison to Slic-solutions}

In \cite{GT}, Giesselmann and Tzavaras introduce a notion of
\emph{slic}-solution in order to study the formation of cavities and
fractures in dynamic elasticity.  In doing so they work primarily with
the second-order system \cref{2der}, namely
\[
  y_{tt} - \tau(y_x)_x = 0,
\]
and they study discontinuities in the motion $y$.  Their main goal is
to settle a question of nonuniqueness in earlier examples, in which
cavitating solutions apparently had a lower energy than smooth
solutions~\cite{Spector}.  To do this, they mollify the discontinuous
solution, and declare $y$ to be a slic (Singular Limiting
Induced from Continuum) solution, if for any mollifier
$\phi\in C^{\infty}(\RR)$, the mollified solution
$y^{\epsilon}=y*\phi_{\epsilon}$ satisfies
\[
  f^{\epsilon}:=  y^{\epsilon}_{tt}-\tau(y^{\epsilon}_x)_x \to 0 \, .
\]
Specifically, for the particular solutions found in \cite{Spector},
the authors discover the limiting value of the energetic cost of
opening up a cavity, and inclusion of this term implies that the
cavities are non-entropic.

Here we compare our methods and results to those of \cite{GT}.  We
begin with their one-dimensional example, which is a slic solution,
and show that it is a weak* solution.  We similarly calculate the
entropy production, and show that this agrees with the limit obtained
in \cite{GT}.  Our approach has several advantages: by
extending the stress and entropy via \cref{tauhat} and \cref{What},
respectively, we do not need to work directly with mollifiers;
our calculations are exact so there are no approximation errors;
because we are working in the space of measures, we do not need
to integrate explicitly; our approach is general and yields a
localized description of all waves; and our calculations themselves
are much shorter.

We begin with the solution studied in \cite{GT}: this is a
self-similar discontinuous motion $y(t,x)$ that represents a shearing
motion with the fracture that is initiated at time $t=0$ and then
propagates outwards, behind an expanding shock wave.  Explicitly $y$
is given by
\begin{equation}\label{ydef}
\begin{aligned}
   y(t,x) &= \lambda\, x\, \big(1-H(x+\sigma\, t)\big)\\
   &\qquad{}  +(\alpha\, x - Y_0\,t)\,\big(H(x+\sigma\,t)-H(x)\big)\\
   &\qquad{} +(\alpha\, x + Y_0\,t)\,\big(H(x)-H(x-\sigma\,t)\big)\\
   &\qquad{} + \lambda\, x\, H(x-\sigma t) .
\end{aligned}
\end{equation}
Here $Y_0$ is the velocity of the crack, $\sigma$ is the shock speed,
$\lambda$ is the initial stretching and $\alpha<\lambda$ is a free
parameter repressnting the strain at the cavity surface, $\a = u_*$ in
\cref{bdry}.  The parameters are related via the Rankine-Hugoniot conditions,
\begin{equation}\label{RHcond}
  Y_0 = \sigma (\lambda-\alpha), \qquad
  \sigma^2 (\lambda-\alpha)=\tau(\lambda)-\tau(\alpha)\,.
\end{equation}

As above, we work with the associated first order system \cref{elast}.
For the given motion, the components $U \in C^1\big(0,T;M(\RR)\big)$
and $v\in C^1\big(0,T;BV(\RR)\big)$ are easily computed to be
\begin{equation}\label{uvcomp}
\begin{aligned}
   U(t)&=y_x = 2\,t\, Y_0\, \delta_0 +
     \lambda \,(1-H_{-\sigma\, t}) + \alpha\,(H_{-\sigma\, t}-H_{\sigma\, t})
     + \lambda\, H_{\sigma\, t},  \\
   v(t)&=y_t = - Y_0\, (H_{-\sigma\, t} - H_0) + Y_0 \,( H_0 - H_{\sigma\, t}),
\end{aligned}
\end{equation}
where $H_a$ stands for the shifted Heaviside function $x \to H(x-a)$,
and  for convenience we have dropped the inclusion $\iota:L^1\to M$.
It is easy to check that the generalized Rankine-Hugoniot conditions
\cref{tauRH} reduce to \cref{RHcond} for this solution.

A direct computation reveals that
\[
  U_t = 2\, Y_0\, \delta_0 -
     \sigma \,(\lambda -\alpha)\,(\delta_{-\sigma\, t}
     +\delta_{\sigma\, t}) \com{and}
  v_x = 2\, Y_0\, \delta_0 -
    Y_0\,(\delta_{-\sigma\, t}+\delta_{\sigma\, t}),
\]
so that \cref{elast}$_1$ holds in $C^1(0,T; M(\RR))$.  While in
\cite{GT} the authors use mollifiers and slic solutions to deal with
the nonlinear term $\tau(y_x)$, we calculate directly using the
extension \cref{tauhat}.  Using \cref{tauhat} in \cref{uvcomp}, we get
\[
  \wh\tau(U) = L_\tau\,2\,t\, Y_0\, \delta_0 +
     \tau(\lambda) \,(1-H_{-\sigma\, t})
   + \tau(\alpha)\,(H_{-\sigma\, t}-H_{\sigma\, t})
     + \tau(\lambda)\, H_{\sigma\, t},
\]
so that
\[
  \wh\tau(U)_x = L_\tau\,2\,t\, Y_0\,\DD_x \delta_0 +
     \big(\tau(\lambda)-\tau(\alpha)\big)
   \,(\delta_{\sigma\, t}-\delta_{-\sigma\, t}),
\]
while from \cref{uvcomp},
\[
  v_t =  \sigma \, Y_0 \,(\delta_{\sigma\, t}  - \delta_{-\sigma\, t } )\,.
\]
Using \cref{RHcond}, \cref{elast}$_1$ holds in $C^1(0,T; M(\RR))$, so
we conclude that $y$ is a weak* solution, if and only if $L_\tau = 0$.
This is consistent with \cref{thm:elast} and with the
(necessary and sufficient) condition in \cite{GT} for slic solutions.
Thus, the approximation procedure of \cite{GT} applied to their crack
initiation example can be replaced by extending the stress
$\widehat{\tau}$ and working directly with singular measures.

We now compute the entropy and entropy production of the solution
\cref{uvcomp}.  Setting $L_\tau = 0$ and recalling the entropy is $\eta
= \frac12v^2 + W(u)$ with entropy flux $q = v\,\tau(u)$, we again
extend and write
\[
\begin{aligned}
  \widehat{q} &= -v \,\wh{\tau}(U)
     = \Big( Y_0 \big( H_{-\sigma\, t} - H_0 \big)
       - Y_0 \big( H_0 - H_{\sigma\, t} \big) \Big)\, \tau(\alpha)\,,
 \com{and} \\
  \widehat{\eta} & =\TS{\frac12}{Y_0^2}\, \big( H_{-\sigma t} -
  H_{\sigma t} \big) + L_W\, 2\, t\, Y_0\, \delta_0 \\
  & \qquad{} + W(\lambda)\,\big((1-H_{-\sigma\, t}) + H_{\sigma\, t}\big)
  +W(\alpha)\,\big(H_{-\sigma\, t}-H_{\sigma\, t}\big)\,.
\end{aligned}
\]
Differentiating, we get
\[
\begin{aligned}
  \del_x \wh{q} &= Y_0\, \tau(\alpha)\, \big(
     \delta_{-\sigma \,t} +\delta_{\sigma\, t}- 2\,\delta_0 \big), \com{and}\\
  \del_t \wh{\eta} &=
   \TS{\frac12}{Y_0^2}\,\sigma\,\big(\delta_{-\sigma t}+\delta_{\sigma
     t} \big)  + L_W\, 2\, Y_0\, \delta_0
   - \sigma\, \big(W(\lambda)-W(\alpha)\big)\,
    \big(\delta_{-\sigma t}+\delta_{\sigma t}\big),
\end{aligned}
\]
so that the entropy production is
\[
\begin{aligned}
  \del_t \widehat{\eta} + \del_x \widehat{q} &=
  \theta\, \big( \delta_{-\sigma\, t} + \delta_{\sigma\, t}\big)
   +2\, Y_0\, \big(\tau_{\infty}-\tau(\alpha)\big)\,\delta_0,
\com{where}\\
     \theta &:= \sigma \big( \TS{\frac12}Y_0^2 + W(\alpha)-W(\lambda)\big)+\tau(\alpha)\,Y_0\,,
\end{aligned}
\]
where we have used $L_W = \tau_\infty$.  Finally, using \cref{RHcond}
and manipulating, we get
\begin{equation}
\begin{aligned}
  \theta &= \sigma\,\Big(\TS{\frac12}
   \big(\tau(\lambda)+\tau(\alpha)\big)\,(\lambda-\alpha)
  -\big(W(\lambda)-W(\alpha)\big)\Big)\\
     & = \sigma \int_{\alpha}^{\lambda}
   \Big(\TS{\frac12}\big(\tau(\lambda)+\tau(\alpha)\big)
       -\tau(s)\Big)\; ds <0,
\end{aligned}
\label{theta}
\end{equation}
because $\tau''<0$ for a stress with softening response.

It is now clear that the shocks with speed $\pm\sigma$ have negative
entropy production, as needed, but the crack at the origin does not,
so is not entropic.  This again mirrors the results of \cite{GT} and
\cref{thm:elast}.

Finally, in \cite{GT}, the authors define the total mechanical energy
of the slic-solution $y$ on the interval $I \subset \RR$ via the limit
\[
  E_{slic}(y;I) := \lim_{\epsilon \to 0} \int_{I}
  \Big(\TS{\frac{1}{2}}(y_t^{\epsilon})^2+W(y_x^{\epsilon})\Big)\;dx.
\]
However, in our framework the total energy on the interval is simply
\[
  \wh\eta(I) = \int_I \Pi(\wh\eta) \;dx + \wh\eta_s(I),
\]
where as usual $\wh\eta_s$ is the singular part.  In particular, if
\[
  (-\sigma\,t,\sigma\,t)\subset I, \com{we have}
  \iota\big(H_{-\sigma\, t}-H_{\sigma\, t}\big)(I) = 2\,\sigma\,t,
\]
so we immediately obtain
\[
\begin{aligned}
  \wh\eta(I) &= W(\lambda)\,(b-a) + L_W\,2\,t\,Y_0
   + \big(\TS{\frac12}Y_0^2 +
   W(\alpha)-W(\lambda)\big)\,2\,\sigma\,t\\
  &= \wh\eta_{nc}(I) + 2\,t\,\big(
     \theta + Y_0\,(\tau_\infty - \tau(\alpha))\big),
\end{aligned}
\]
where $\wh\eta_{nc}$ is the entropy of the crack-free solution $y=\lambda\,x$.
Using \cref{theta} and \cref{RHcond}, we calculate
\[
  \theta + Y_0\,(\tau_\infty - \tau(\alpha))
  = \sigma\,\int \TS{\frac12}\big(\tau(\lambda)-\tau(\alpha)\big)
     + \big(\tau_\infty - \tau(s)\big)\;ds >0,
\]
so that the crack-free solution has lower energy, as noted in~\cite{GT}.
Once again we see the advantage of working directly with measures, and
avoiding explicit integrations.

\end{document}